\definecolor{viola}{rgb}{0.3,0,0.7}
\definecolor{lilla}{rgb}{0.8,0,0.8}
\definecolor{ciclamino}{rgb}{0.5,0,0.5}
\definecolor{blu}{rgb}{0,0,0.7}
\definecolor{verde}{rgb}{0,0.5,0.2}
\definecolor{rosso}{rgb}{0.8,0,0}
\definecolor{gius}{rgb}{0.0,0.5,0.5}
\definecolor{gray}{rgb}{0.9,0.9,0.9}
\def\luca #1{{\color{blu}#1}}
\def\pier #1{{\color{rosso}#1}}
\def\gius#1{{\color{gius}#1}}
\def\luca #1{#1}
\def\pier #1{#1}
\def\gius #1{#1}
\theoremstyle{plain}
\newtheorem{thm}{Theorem}[section]
\newtheorem{lem}[thm]{Lemma}
\newtheorem{prop}[thm]{Proposition}
\theoremstyle{definition}
\newtheorem{rmk}[thm]{Remark}
\def\enne{\mathbb{N}}
\def\zeta{\mathbb{Z}}
\def\erre{\mathbb{R}}
\def\eps{\varepsilon}
\def\beq{\begin{equation}}
\def\eeq{\end{equation}}
\def\to{\rightarrow}
\def\wto{\rightharpoonup}
\def\wstarto{\stackrel{*}{\rightharpoonup}}
\def\embed{\hookrightarrow}
\def\cembed{\stackrel{c}{\hookrightarrow}}
\def\norm #1{\left\|#1\right\|}
\def\ip #1#2{\left<#1,#2\right>}
\newcommand\ud{u_\delta}
\newcommand\xid{\xi_\delta}
\newcommand\mud{\mu_\delta}
\begin{document}
\begin{center}

{\huge\rm Bounded solutions and their asymptotics\\[0.2cm]
for a doubly nonlinear Cahn\pier{--}Hilliard system}
\\[1cm]
{\large\sc Elena Bonetti}\\
{\normalsize e-mail: {\tt elena.bonetti@unimi.it}}\\[.25cm]
{\large\sc Pierluigi Colli}\\
{\normalsize e-mail: {\tt pierluigi.colli@unipv.it}}\\[.25cm]
{\large\sc Luca Scarpa}\\
{\normalsize e-mail: {\tt luca.scarpa@univie.ac.at}}\\[.25cm]
{\large\sc Giuseppe Tomassetti}\\
{\normalsize e-mail: {\tt giuseppe.tomassetti@uniroma3.it}}\\[.25cm]
$^{(1)}$
{\small Dipartimento di Matematica ``F.Enriques'', Universit\`a degli 
Studi di Milano}\\ 
{\small Via Saldini 50, 20133 Milano, Italy}\\[.2cm]
$^{(2)}$
{\small Dipartimento di Matematica ``F. Casorati'', Universit\`a di Pavia}\\
{\small \pier{Via Ferrata 5}, 27100 Pavia, Italy}\\[.2cm]
$^{(3)}$
{\small Faculty of Mathematics, University of Vienna}\\
{\small Oskar-Morgenstern-Platz 1, 1090 Vienna, Austria}\\[.2cm]
$^{(4)}$
{\small Dipartimento di Ingegneria - Sezione Ingegneria Civile}\\
{\small Universit\`a degli Studi ``Roma Tre'', Via Vito Volterra 62, Roma, Italy}

\end{center}

\begin{abstract}
In this paper we deal with a doubly nonlinear Cahn\pier{--}Hilliard system, 
where both an internal constraint 
on the time derivative of the concentration 
and a potential for the concentration are introduced. 
The definition of the chemical potential includes two regularizations: a viscosity and a diffusive term. 
\gius{First of all, 
we prove existence and uniqueness of a bounded solution to the
system using a 
nonstandard maximum-principle argument for 
time-discretizations of doubly nonlinear equations.
Possibly including singular potentials, this novel 
result brings improvements over previous 
approaches to this problem.}
Secondly, under suitable assumptions on the data, 
we show the convergence of solutions to the respective limit problems
once either of the two regularization parameters vanishes.
\end{abstract}

\noindent{\bf AMS Subject Classification:} 35B25,  35D35, 35G31, 
35K52, 74N20, 74N25.\\[.5cm]
{\bf Key words and phrases:} Cahn\pier{--}Hilliard equation, nonlinear viscosity, 
non-smooth regularization, nonlinearities, 
initial-boundary value problem, bounded solutions, asymptotics.

\pagestyle{myheadings}
\newcommand\testopari{\sc Asymptotics for a doubly nonlinear CH system}
\newcommand\testodispari{\sc Bonetti -- Colli -- Scarpa -- Tomassetti}
\markboth{\testodispari}{\testopari}


\thispagestyle{empty}

\section{Introduction}\label{intro}
\setcounter{equation}{0}

The main focus of this paper is the asymptotic behaviour, 
when either of the positive parameters $\varepsilon$ or $\delta$ converges to zero, of the following system:
\begin{align}
\label{eq1:intro}
\partial_t u - \Delta\mu = 0 \qquad&\text{in } \Omega\times(0,T)\,,\\
\label{eq2:intro}
\mu \in \eps\partial_t u + \beta(\partial_t u) - \delta\Delta u +\psi'(u)+ g \qquad&\text{in } \Omega\times(0,T)\,,\\
\label{boundary:intro}
\partial_{\bf n} u =0\,, \quad \mu=0 \qquad&\text{in } \partial\Omega\times(0,T)\,,\\
\label{init:intro}
u(0)=u_0 \qquad&\text{in } \Omega\,,
\end{align}  
where $\Omega\subset\erre^3$ is a smooth bounded domain and $T>0$
is a fixed final time.
Here $\beta$ is a maximal monotone graph, $\psi'$ is the derivative of a possibly non-convex potential, and $g$ is a forcing term. We shall address the unknowns $u$ and $\mu$ as, respectively, the \emph{concentration} and the \emph{chemical potential}.

System \eqref{eq1:intro}--\eqref{eq2:intro} is a modification of the celebrated Cahn\pier{--}Hilliard (C-H) system, a phenomenological model that has its origin in the work of J.W.~Cahn~\cite{Cahn1961} concerning the effects of interfacial energy on the stability of spinodal states in solid binary solutions. 
Cahn's work built upon previous collaboration with J.W.~Hilliard \cite{Cahn1958}, where the functional
\begin{equation}\label{eq:free_energy}
\mathcal F(u)=\int_\Omega \left(\psi(u)+\frac\delta2 |\nabla u|^2\right)
\end{equation}
was proposed as a model for the (Helmholtz) free energy of a non-uniform system 
whose composition is described by the scalar field $u$. In this functional, the \emph{bulk energy} 
$\psi(u)$ represents the specific energy of a uniform solution, typically a non-convex function. 
The quadratic \emph{gradient energy} $\frac \delta 2 |\nabla u|^2$ takes into account 
microscopic mechanisms that penalize spatial variation of composition, 
and that are responsible for the presence of interfacial energy between phases at the macroscopic scale. 
Cahn showed that certain states, which would be unstable if only the 
bulk energy was accounted for, are in fact stable under local perturbations, 
when the gradient energy is included in the picture.

Besides being a fundamental contribution to Materials Science, the C-H system has had considerable success in many other branches of Science and Engineering where segregation of a diffusant leads to pattern formation, such as population dynamics\cite{Liu2016}, image processing\cite{Bertozzi2007}, \pier{dynamics for mixtures of fluids\cite{GGM},}
tumor modelling\pier{\cite{Agosti2017, CGRS1, CGRS2}}, to name a few.

In the derivation of the Cahn\pier{--}Hilliard system, the variation of the free energy \eqref{eq:free_energy}, namely,
$$\mu_{\textsc{c-h}}:=\frac{\delta\mathcal F}{\delta u}(u)=\psi'(u)-\delta\Delta u,$$ 
is the chemical potential that drives the space-time evolution of the concentration $u$ through the diffusion equation \eqref{eq1:intro}. Here we have written it 
after rescaling time, so that the mobility 
(which we assume to be constant) is numerically equal to the unity 
(equivalently, one may look at the Cahn\pier{--}Hilliard system  as the gradient flow, 
with respect to the norm of the dual of a Sobolev space \cite{Fife2001}). 
The connection between \eqref{eq1:intro}--\eqref{eq2:intro} and the C-H system 
is more transparent if we rewrite \eqref{eq2:intro} as a pair of an equation and an inclusion:
\[
\mu=\mu_{\textsc{c-h}}+\varepsilon\partial_t u+\xi,\qquad \xi\in\beta(\partial_t u).
\]
The additional terms on the right-hand side do not affect the energy, but rather the dissipation. This 
is evident from the energetic estimate 
\begin{equation}\label{eq:energy_estimate}
\frac d {dt}{\mathcal F}(u(t))+\int_\Omega |\nabla\mu|^2+(\varepsilon\partial_t u+\beta(\partial_t u))\partial_t u\le-\int_\Omega g \partial_t u,
\end{equation}
which is obtained by testing the first equation by $\mu$, the second equation by $-\partial_t u$, and by adding the resulting equations.

Since the original work of Cahn, innumerable
generalizations of the C-H system have been proposed in the literature. \pier{They are
so} many that it would be difficult to provide a comprehensive account in the present context. We prefer to refer to the review \cite{Miranville2017}. In this respect it is worth mentioning that a systematic procedure to derive and generalize the C-H system has been proposed by M.E.~Gurtin \cite{Gurtin1996}, by extending the thermodynamical framework of continuum mechanics, as also reported in \cite{Miranville2000}. 	\pier{Let us also mention an alternative approach due to Podio-Guidugli~\cite{Podio} leading to another
viscous C-H system of nonstandard type~\cite{CGPS1, CGPS2}.}

In this sea of literature, the problem that we consider belongs to 
the class of doubly-nonlinear Cahn\pier{--}Hilliard systems, characterized 
by nonlinearity both on the instantaneous value $u$ of the concentration 
and on its time derivative $\partial_t u$. 
The particular form \eqref{eq1:intro}--\eqref{eq2:intro} has been the object 
of mathematical investigation in \cite{Miranville2010} 
with Neumann homogeneous conditions for the chemical potential, 
and in a previous paper of ours \cite{bcst1}, 
where a discussion of its thermodynamical consistency can also be found. 
\luca{The system \eqref{eq1:intro}--\eqref{eq2:intro} has also been
studied in \cite{scar19} under dynamic boundary conditions.}
A similar system was investigated in \cite{Miranville2009}, 
where the nonlinearity $\beta(\partial_t u)$ is replaced by $\partial_t\alpha(u)$.
Among other mathematical work on the C-H system related to the present paper, 
we mention the contributions
by Novick-Cohen and al.  \cite{Novic1988viscous,NovicP1991TAMS} 
on the viscous C-H equation, which is obtained in the case $\beta=0$
removing the nonlinear viscosity contribution. 

\luca{In all of the above-mentioned results, existence of solutions for the 
system \eqref{eq1:intro}--\eqref{eq2:intro} is proved under 
some polynomial growth assumptions
either on the nonlinearity $\beta$ acting
on the viscosity or on the nonlinearity $\psi$.
While this is certainly satisfactory in providing 
some first existence results, 
on the other hand it would be desirable to obtain 
well-posedness for the system even for possibly singular 
choices of the nonlinearities. Indeed, 
this is not only interesting from the mathematical perspective, 
but especially in the direction of applications: it
is well-known in fact that the most
physically-relevant choice for the double-well potential $\psi$
is the so-called logarithmic one, defined as}
\begin{align*}
  &\luca{\psi_{log}(r):=\frac{c}{2}\left[(1+r)\ln(1+r) + (1-r)\ln(1-r)\right] - \frac{c_0}{2}r^2\,, \quad r\in(-1,1)\,,}\\ 
  &\qquad \pier{\hbox{with }\, 0<c<c_0\,.}
\end{align*}%

\luca{The first main question that we answer in this paper
concerns then the well-posedness of system \eqref{eq1:intro}--\eqref{init:intro}
in the case of arbitrarily singular nonlinearities $\beta$ and $\psi$.
Our first main result (see Theorem~\ref{thm-prelim}) is a proof of
\pier{the} existence and uniqueness of {\em bounded} solutions 
for the system \eqref{eq1:intro}--\eqref{init:intro}
under no growth assumptions on $\beta$ and $\psi$, 
possibly including logarithmic behaviours as above.
In this direction, we are inspired by some arguments performed in
\cite{BCT}, covering the analysis of the system \eqref{eq1:intro}--\eqref{init:intro}
in the singular case $\delta=0$.
The main idea here 
was based on the fact that if the initial condition is within a finite interval
(contained in the effective domain of the potential $\psi$)
and if the bulk free energy has sufficiently fast growth, 
then the concentration is essentially bounded in the parabolic domain 
$Q_T=\pier{\Omega \times (0,T)} $. This allowed to deduce, 
through the Gronwall lemma, a contraction estimate to prove existence and uniqueness of solutions. 
However, in our case
the presence of the term $-\delta\Delta u$ in the inclusion for the chemical potential
prevents us from relying on a similar contraction argument.
To overcome this problem, we prove
a preliminary boundedness result:
using a maximum principle for doubly nonlinear parabolic equations 
in combination with a suitable time-discretization of the problem, 
we show that the solution $u$ 
never touches the edges of the domain of $\psi$
and remains bounded in the parabolic domain $Q_T$.
\gius{Thus, we are able} to prove well-posedness also with very singular behaviours of $\psi$ and $\beta$,
under less stringent conditions on the potential than those in \cite{bcst1}.
\gius{This novel result actually improves the previous approaches to the problem; moreover, the argument is not standard at all and, in our opinion, gives value to our contribution.}}

\luca{\pier{Once} well-posedness is established in this general framework, 
we focus on questions of more qualitative nature.
More specifically,} 
both the viscous term and the energetic term in \eqref{eq2:intro} 
provide assistance in handling the possible non-smoothness of $\beta$ 
and the nonlinearity of $\psi'$. 
It is then natural to inquire whether one of these terms, alone, 
is sufficient to guarantee well-posedness, and whether the singular 
limits obtained when either $\varepsilon\searrow 0$ or $\delta\searrow 0$ converge to
the the limiting equations.

The second main result of this paper
(see Theorem \ref{thm1}) is an asymptotic result, 
and shows convergence of 
the solutions of \eqref{eq1:intro}--\eqref{init:intro} 
in the limit $\varepsilon\searrow 0$, \luca{with $\delta>0$ being fixed.
This confirms that the diffusive regularization $-\delta\Delta u$ alone
allows to handle the doubly nonlinear problem, even when the 
nonlinearity $\beta$ acting on the viscosity is multivalued and 
not necessarily coercive. For example, 
a physically relevant choice for $\beta$
in connection \pier{with} phase-change and Stefan-type problems is
the multivalued graph
\[
\beta_{\pier{sign}}(r):=\begin{cases}
-1 \quad&\text{if } r<0\,,\\
[-1,1] \quad&\text{if } r=0\,,\\
1 \quad&\text{if } r>0\,.
\end{cases}
\]
Note that although 
$\beta_{sign}$ \pier{is} nonsmooth and noncoercive, it can
be chosen in the equation \eqref{eq2:intro} as long as $\delta>0$ only
(even for $\eps=0$).
From the mathematical perspective, 
the main tools that we use here are 
compactness arguments combined with 
monotone analysis techniques in order to
pass to the limit in the two nonlinearities.}

\luca{
An alternative scenario to handle the monotone term would be 
to accompany it with the viscous regularization $\eps\partial_t u$ alone,
discarding the energetic regularization $-\delta\Delta u$ through the interface energy.}
The degenerate case $\delta=0$ was the object of the investigation in \cite{BCT}.
This belongs to a wider class of degenerate parabolic systems
which find their application in the modelling of hysteretic behaviour 
in diffusion process, such as hysteresis in porous media 
\cite{Bagagiolo2000,Botkin2016,Schweizer2017, Tomas} or in hydrogen storage devices \cite{Latroce}. 
In all these cases, the major manifestation of hysteresis is in the fact 
that the pressure that is needed to induce adsorption is higher than 
the pressure needed to induce desorption. 
\luca{This scenario is the object of our third Theorem \ref{thm2}, 
which covers the asymptotics of the system \eqref{eq1:intro}--\eqref{eq2:intro}
as $\delta\searrow0$, with $\eps>0$ being fixed.
The main tools that we rely on consist again \pier{in}
compactness and monotonicity techniques: furthermore, 
in the asymptotics $\delta\searrow0$ we are able to show
some refined $L^\infty$-estimates, allowing us to 
prove also the convergence rate as $\delta\searrow0$.}

\luca{Note that if in addition to $\delta=0$ we assume also $\beta=0$, 
then we recover the viscous forward-backward parabolic equation studied in \cite{NovicP1991TAMS}.
The asymptotics $\delta\searrow0$ in the viscous case $\eps>0$ and with $\beta=0$
was studied in the work \cite{colli-scar16},
where convergence of the vioscous Cahn\pier{--}Hilliard to the limiting
forward-backward parabolic equation was proved.}

Here is the outline of the paper.
In the next section we state the precise assumptions, the analytical setting, and the main theorems that 
we prove. In Section~\ref{sec:prelim}, we prove the existence result for 
$\delta,\varepsilon>0$ generalizing the results in \cite{bcst1}. 
Then in Sections~\ref{sec:limit-as-epssearrow0} 
and \ref{sec:delta-to-0} 
we perform the asymptotics investigation 
once we let vanish the approximating paramaters $\varepsilon$ and $\delta$, respectively.


\section{Assumptions and main results}
\setcounter{equation}{0}
\label{results}

Throughout the paper, $\Omega$ is a smooth bounded domain in $\erre^3$ with  boundary $\Gamma$
and $T>0$ is a fixed final time; for any $t\in(0,T]$ we use the notation
\[
  Q_t:=\Omega\times(0,t)\,, \quad \Sigma_t:=\Gamma\times(0,t)\,, \quad Q:=Q_T\,, \quad \Sigma:=\Sigma_T.
\]
Moreover, we introduce the spaces
\begin{align*}
  &H:=L^2(\Omega)\,, \quad V:=H^1(\Omega)\,, \quad V_0:=H^1_0(\Omega)\,,\\
  &W= H^2(\Omega)\,, \quad W_0=W\cap V_0\,, \quad
  W_{\bf n}:=\left\{v\in W:\, \partial_{\bf n}v=0 \, \text{ a.e.~on } \Gamma\right\}\,,
\end{align*}
endowed with their usual norms, and we identify $H$ with its dual,
so that $(V, H, V^*)$ is a Hilbert triplet.
The symbol $\left<\cdot, \cdot\right>$ denotes the duality pairing between $V^*$ and $V$.
We will need the following lemma, which is a variation of 
the well-know compactness lemma (see e.g.~\cite[Lem.~5.1, p.~58]{lions}).

\begin{lem}
  For every $\sigma>0$, there exists $C_\sigma>0$ such that 
  \beq\label{comp}
  \norm{z}^2_H\leq \sigma\norm{\nabla z}^2_H + C_\sigma\norm{z}^2_{V_0^*} \quad\forall\,z\in V\,.
  \eeq
\end{lem}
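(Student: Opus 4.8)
The plan is to argue by contradiction, exploiting the compact embedding $V\embed\embed H$ (Rellich--Kondrachov) together with the continuous embedding $H\embed V_0^*$ associated with the Hilbert triplet $(V_0,H,V_0^*)$. So suppose the assertion fails: then there exist $\sigma_0>0$ and a sequence $(z_n)_n\subset V$ such that
\[
\norm{z_n}_H^2 > \sigma_0\norm{\nabla z_n}_H^2 + n\,\norm{z_n}_{V_0^*}^2 \qquad\text{for every }n\in\enne\,.
\]
After rescaling we may assume $\norm{z_n}_H=1$ for all $n$. The displayed inequality then forces $\norm{\nabla z_n}_H^2<1/\sigma_0$, so that $(z_n)_n$ is bounded in $V$, while at the same time $\norm{z_n}_{V_0^*}^2<1/n\to0$.

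Next I would use the compactness of $V\embed H$ to extract a (not relabelled) subsequence with $z_n\to z$ strongly in $H$ for some $z\in H$; in particular $\norm{z}_H=1$. On the other hand, for $v\in H$ regarded as an element of $V_0^*$ one has $\norm{v}_{V_0^*}\le\norm{v}_H$, since $\abs{\int_\Omega v\,w}\le\norm{v}_H\norm{w}_H\le\norm{v}_H\norm{w}_V$ for every $w\in V_0$; hence the embedding $H\embed V_0^*$ is continuous and $z_n\to z$ also in $V_0^*$. Combining this with $\norm{z_n}_{V_0^*}\to0$ gives $\norm{z}_{V_0^*}=0$, i.e.\ $\int_\Omega z\,w=0$ for all $w\in V_0=H^1_0(\Omega)$.

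The last step is to invoke the density of $V_0$ in $H$: the identity $\int_\Omega z\,w=0$ for all $w\in V_0$ then yields $z=0$ in $H$, contradicting $\norm{z}_H=1$, and the proof is complete. The only point needing a little care — and it is entirely standard — is the bookkeeping of the $V_0^*$-norm on elements of $H$ and the consequent transfer of strong $H$-convergence to $V_0^*$; apart from that, the argument reduces to Rellich--Kondrachov compactness and the density of $H^1_0(\Omega)$ in $L^2(\Omega)$.
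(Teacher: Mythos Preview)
Your proof is correct and follows essentially the same contradiction argument as the paper: normalize in $H$, use boundedness in $V$ together with the compact embedding $V\embed H$ to get strong $H$-convergence, and then exploit $\norm{z_n}_{V_0^*}\to0$ and the continuous embedding $H\embed V_0^*$ to force the limit to vanish. The only cosmetic difference is that the paper first passes through weak convergence in $V$ before upgrading, whereas you invoke Rellich--Kondrachov directly; your explicit use of the density of $V_0$ in $H$ to pass from $z=0$ in $V_0^*$ to $z=0$ in $H$ spells out a step the paper leaves implicit.
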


\begin{proof}
  By contradiction, assume that there is
  $\bar\sigma>0$ and a sequence $(z_n)_n\subseteq V$ such that 
  \[
  \norm{z_n}^2_H > \bar\sigma\norm{\nabla z_n}^2_H + n\norm{z_n}^2_{V_0^*} \quad\forall\,n\in\enne\,.
  \]
  Then, setting $v_n:=z_n/\norm{z_n}_H$ (note that $z_n\neq0$ for all $n$), it follows immediately that
  \[
  \norm{v_n}_H=1\,, \quad
  \bar\sigma\norm{\nabla v_n}^2_H + n\norm{v_n}^2_{V_0^*} < 1 \qquad\forall\,n\in\enne\,.
  \]
  Consequently, we deduce that there is $v\in H$ and $w\in H^N$ such that, as $n\to\infty$,
  \[
  v_n\wto v\quad\text{in } H\,, \qquad
  \nabla v_n \wto w \quad\text{in } H^N\,, \qquad
  v_n\to 0 \quad\text{in } V_0^*\,.
  \]
  The first two convergences imply that $v\in V$, $w=\nabla v$ and $v_n\wto v$ in $V$.
  Since $V\cembed H$ is compact,
  we deduce that $v_n\to v$ in $H$.
  Moreover, from the third convergence and the fact that $H\embed V_0^*$ continuously,
  we infer that $v=0$. However, by the strong convergence in $H$
  we have 
  \[
  0=\norm{v}_H=\lim_{n\to\infty}\norm{v_n}_H=1\,,
  \]
  which is absurd. This concludes the proof.
\end{proof}

We assume that
\begin{gather}
  \label{psi1}
  \psi\in C^2(a, b)\,, \quad -\infty\leq a<b\leq+\infty\,,\\
  \label{psi3}
  \psi(r)\geq0 \quad\forall\, r\in(a,b)\,,\\
  \label{psi4}
  \lim_{r\to a^+}\psi'(r)=-\infty\,, \quad \lim_{r\to b^-}\psi'(r)=+\infty\,,\\
  \label{psi5}
  \psi''(r)> -K \quad\forall\,r\in(a,b)\pier{,}
\end{gather}
for a positive constant $K$. It is convenient to introduce 
\beq
\label{pier1}
  \gamma:(a,b)\to\erre\,, \qquad \gamma(r):=\psi'(r) + Kr\,, \quad r\in\erre\,,
\eeq
which is maximal monotone and strictly increasing.
In particular, there exists a unique $r_0\in(a,b)$ such that $\gamma(r_0)=0$.
We also define the proper convex function
\beq
  \label{pier2}
  \widehat\gamma(r):=\int_{r_0}^r\gamma(s)\,ds\,, \qquad r\in(a,b)\,.
\eeq 
Furthermore, let
\beq
  \label{beta_hat}
  \widehat{\beta}:\erre\to[0,+\infty] \quad\text{convex and l.s.c., with}\quad
  \widehat{\beta}(0)=0\,, \ \quad\beta:=\partial\widehat\beta\,,
\eeq
and note that $0\in\beta(0)$. \luca{We shall denote the convex conjugate of $\widehat\beta$
by $\widehat{\beta^{-1}}$. Note that 
$\widehat{\beta^{-1}}:\erre\to[0,+\infty]$ with $\widehat{\beta^{-1}}(0)=0$, and 
$\partial\widehat{\beta^{-1}}$ \pier{is nothing but $\beta^{-1}$, the inverse graph of $\beta$}. Let us also recall
the Young inequality:
\[
  rs\leq\widehat\beta(r) + \widehat{\beta^{-1}}(s) \quad\forall\,r,s\in\erre\,, \quad
  \text{\pier{where the equality} holds if and only if } s\in\beta(r)\,.
\]
For general results on convex analysis 
we refer to \cite{barbu-monot}.}

In this setting, existence of solution for problem \eqref{eq1:intro}--\eqref{init:intro} has been shown 
in \cite{bcst1}
for $\eps,\delta>0$ fixed, with additional growth restrictions
either on $\beta$ or $\psi$.
The first main theorem that we prove here is a generalized existence result
for the problem \eqref{eq1:intro}--\eqref{init:intro} with
$\eps,\delta>0$ fixed under no growth restrictions on the operators.
\begin{thm}\label{thm-prelim}
  Let $\eps>0$, $\delta>0$, and
  \begin{gather}
    \label{pier3}
     u_{0,\eps\delta}\in \pier{W_{\bf n}}\,,\qquad  
     \exists\,[a_0, b_0]\subset(a,b):\; a_0\leq u_{0,\eps\delta} \leq b_0 \quad\text{a.e.~in } \Omega\,,\\
     \label{pier4}
     g_{\eps\delta}\in H^1(0,T; H)\cap L^\infty(Q)\,.
  \end{gather}
  Then, there are
  two constants $a_0', b_0'\in\erre$, possibly depending on
  $\eps$ and $\delta$, with $[a_0,b_0]\subseteq[a_0',b_0']\subset(a,b)$,
  and a unique triplet $(u_{\eps\delta},\mu_{\eps\delta},\xi_{\eps\delta})$
  such that 
  \begin{gather}
    \label{u}
    u_{\eps\delta}\in W^{1,\infty}(0,T; H)\cap H^1(0,T; V)\cap 
    L^\infty(0,T;\pier{W_{\bf n}})\\
    \label{u-inf}
    a_0'\leq u_{\eps\delta} \leq b_0' \quad\text{a.e.~ in } Q\,,\\
    \label{mu}
    \mu_{\eps\delta}\in L^\infty(0,T; W_0)\cap L^2(0,T; H^3(\Omega))\,,\\
    \label{xi_psi}
    \xi_{\eps\delta} \in L^\infty(0,T; H)\,, \quad \ \psi'(u)\in L^\infty(Q)\,,\\
    \label{incl}
    \xi_{\eps\delta}\in\beta(\partial_t u_{\eps\delta}) \quad\text{a.e.~in } Q\,,\\
    \label{1}
    \partial_t u_{\eps\delta}(t)-\Delta \mu_{\eps\delta}(t) = 0 \quad\text{for a.e.~}t\in(0,T)\,,\\
    \label{2}
    \mu_{\eps\delta}(t)=\eps\partial_t u_{\eps\delta}(t)+\xi_{\eps\delta}(t)-\delta\Delta u_{\eps\delta}(t) +
    \psi'(u_{\eps\delta}(t)) + g(t)\quad\text{for a.e.~}t\in(0,T)\,,\\
    \label{3}
    u_{\eps\delta}(0)=u_0.
  \end{gather}
\end{thm}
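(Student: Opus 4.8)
The plan is to build the solution as the limit of a semi-implicit time discretization, to read off the $L^\infty$-bound \eqref{u-inf} already at the discrete level by a maximum-principle argument that exploits the blow-up \eqref{psi4} of $\psi'$ against the sign of the viscous and monotone terms, and finally to derive uniqueness from a contraction estimate that becomes available once boundedness is known. Concretely, fix $N\in\enne$, set $\tau:=T/N$, $g^n:=g_{\eps\delta}(n\tau)$, $u^0:=u_{0,\eps\delta}$, and solve recursively, for $n=1,\dots,N$, the elliptic problem: find $(u^n,\mu^n,\xi^n)$ with $\partial_{\bf n}u^n=0$ and $\mu^n=0$ on $\Gamma$, $\xi^n\in\beta\big((u^n-u^{n-1})/\tau\big)$, and
\beq
\label{pdisc}
\frac{u^n-u^{n-1}}{\tau}-\Delta\mu^n=0\,,\qquad
\mu^n=\eps\,\frac{u^n-u^{n-1}}{\tau}+\xi^n-\delta\Delta u^n+\psi'(u^n)+g^n\,.
\eeq
Eliminating $\mu^n$ via the first relation in \eqref{pdisc}, the system reduces to an inclusion for $u^n$ alone, governed by a maximal monotone and coercive operator — the terms $\eps/\tau$ and $-\delta\Delta$ dominating the semiconvexity constant $K$ of \eqref{psi5} for $\tau$ small — hence uniquely solvable by standard monotone-operator theory, with $a<u^n<b$ forced automatically by the effective domain of the monotone part $\gamma$ of $\psi'$ (if one prefers to invoke \cite{bcst1} at the continuous level, one first replaces $\beta$ by its Yosida approximation $\beta_\lambda$ and removes it at the end, all the estimates below being uniform in $\lambda$). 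Testing the first equation in \eqref{pdisc} by $\tau\mu^n$, the second by $-(u^n-u^{n-1})$, adding, and using $\psi''>-K$ together with $\xi^n(u^n-u^{n-1})\geq0$, yields the discrete analogue of \eqref{eq:energy_estimate}, which bounds $(\nabla\mu^n)$ and $\sqrt\eps\,(u^n-u^{n-1})/\tau$ in the discrete $L^2(0,T;H)$ norm and $(\nabla u^n)$, $(\psi(u^n))$ in the discrete $L^\infty$ norm, uniformly in $\tau$. Differentiating \eqref{pdisc} in the discrete time (i.e.\ subtracting consecutive systems) and testing suitably — using again only $\psi''>-K$ and $\eps>0$, while \eqref{pier3}--\eqref{pier4} supply the datum at $n=1$ — one obtains the crucial $\tau$-uniform bound $\max_n\norm{(u^n-u^{n-1})/\tau}_H^2+\sum_n\tau\norm{\nabla(u^n-u^{n-1})/\tau}_H^2\leq C_{\eps\delta}$.

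The key step is the $L^\infty$-bound. From $-\Delta\mu^n=-(u^n-u^{n-1})/\tau\in H$ with $\mu^n|_\Gamma=0$, elliptic regularity and $H^2(\Omega)\embed L^\infty(\Omega)$ give $\norm{\mu^n}_{L^\infty(\Omega)}\leq C\norm{(u^n-u^{n-1})/\tau}_H\leq C_{\eps\delta}$, uniformly in $n$ and $\tau$. Using \eqref{psi4}, choose $\rho\in(a,b)$ so close to $b$ that $\psi'(r)>C_{\eps\delta}+\norm{g_{\eps\delta}}_{L^\infty(Q)}+1$ for all $r\in(\rho,b)$, and set $b_0':=\max\{b_0,\rho\}\in(a,b)$; we show by induction that $u^n\leq b_0'$ a.e., which holds at $n=0$ by \eqref{pier3}. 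Assuming $u^{n-1}\leq b_0'$, test the second relation in \eqref{pdisc} by $v:=(u^n-b_0')^+\geq0$, supported in $\{u^n>b_0'\}$: the diffusive term gives $\delta\norm{\nabla v}_H^2\geq0$, and on $\{u^n>b_0'\}$ one has $u^n>b_0'\geq u^{n-1}$, so $(u^n-u^{n-1})/\tau>0$ there, whence $\xi^n\geq0$, $\eps(u^n-u^{n-1})/\tau\geq0$, and $\psi'(u^n)>C_{\eps\delta}+\norm{g_{\eps\delta}}_{L^\infty(Q)}+1$ on that set; consequently
\[
\big(C_{\eps\delta}+1\big)\norm{v}_{L^1(\Omega)}\leq\int_\Omega\mu^n\,v\leq\norm{\mu^n}_{L^\infty(\Omega)}\norm{v}_{L^1(\Omega)}\leq C_{\eps\delta}\norm{v}_{L^1(\Omega)}\,,
\]
forcing $v=0$, i.e.\ $u^n\leq b_0'$ a.e. Testing by $(u^n-a_0')^-$, with $a_0'\in(a,b)$ chosen symmetrically ($\psi'<-C_{\eps\delta}-\norm{g_{\eps\delta}}_{L^\infty(Q)}-1$ near $a$, $a_0'\leq a_0$), gives $u^n\geq a_0'$ a.e. Hence $a_0'\leq u^n\leq b_0'$ for all $n$, with $[a_0,b_0]\subseteq[a_0',b_0']\subset(a,b)$, uniformly in $\tau$.

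Once this is known, $\psi'(u^n)$ is bounded in $L^\infty(Q)$; then \eqref{pdisc} bounds $-\delta\Delta u^n$ in $L^\infty(0,T;H)$ (discrete), hence $u^n$ in $L^\infty(0,T;W_{\bf n})$, and $\xi^n=\mu^n-\eps(u^n-u^{n-1})/\tau+\delta\Delta u^n-\psi'(u^n)-g^n$ accordingly, while elliptic regularity for $-\Delta\mu^n=-(u^n-u^{n-1})/\tau$ together with the $\tau$-uniform estimates yields the $L^\infty(0,T;W_0)\cap L^2(0,T;H^3)$-bound for $\mu^n$. Passing to the limit $\tau\searrow0$ (and, if needed, $\lambda\searrow0$) through the piecewise-constant and piecewise-linear interpolants, by weak-$*$ compactness and the Aubin--Lions lemma — inequality \eqref{comp} being used to upgrade weak to strong convergence where required — one identifies the triplet $(u_{\eps\delta},\mu_{\eps\delta},\xi_{\eps\delta})$ with the regularity \eqref{u}--\eqref{xi_psi}; the inclusion \eqref{incl} is recovered by monotonicity (equivalently, via the $\liminf$ of $\int\widehat\beta+\int\widehat{\beta^{-1}}$ and the Young inequality), \eqref{u-inf} passes to the limit as an a.e.\ bound, and \eqref{3} follows from $u_{\eps\delta}\in C([0,T];H)$.

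For uniqueness, \eqref{u-inf} confines two solutions $u_1,u_2$ to a compact subinterval of $(a,b)$ on which $\psi'$ is Lipschitz; writing $u:=u_1-u_2$, $\mu:=\mu_1-\mu_2$ and testing \eqref{1} by the inverse Dirichlet Laplacian of $\partial_t u$ gives $\ip{\mu}{\partial_t u}=-\norm{\partial_t u}_{V_0^*}^2\leq0$. Substituting \eqref{2}, the $\beta$-term is nonnegative by monotonicity, and absorbing the Lipschitz $\psi'$-contribution one reaches
\[
\frac{d}{dt}\Big(\tfrac\delta2\norm{\nabla u}_H^2+\tfrac12\norm{u}_{V_0^*}^2\Big)+\tfrac\eps2\norm{\partial_t u}_H^2+\norm{\partial_t u}_{V_0^*}^2\leq C_{\eps\delta}\norm{u}_H^2\,,
\]
whence, estimating $\norm{u}_H^2$ by \eqref{comp} and using $u(0)=0$, Gronwall's lemma forces $u\equiv0$, and then $\mu_1=\mu_2$, $\xi_1=\xi_2$ follow from the equations. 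The main obstacle is precisely the $L^\infty$-bound: getting all the signs right on the super-level set $\{u^n>b_0'\}$ — in particular observing that there $u^n-u^{n-1}>0$ by the inductive hypothesis, which pins down the sign of the a priori undetermined selection $\xi^n\in\beta((u^n-u^{n-1})/\tau)$ — and, upstream of it, securing the $\tau$-uniform bound on $(u^n-u^{n-1})/\tau$ that makes $\norm{\mu^n}_{L^\infty(\Omega)}$ controllable; it is here that $\eps>0$ and $\delta>0$ enter essentially, which is also why $a_0',b_0'$ are allowed to depend on them.
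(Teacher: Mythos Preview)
Your argument is correct and shares the paper's core idea: the $L^\infty$-bound on $u$ comes from a maximum principle at the time-discrete level, obtained by testing with $(u^n-b_0')^+$ and observing that on the super-level set the inductive hypothesis forces $u^n-u^{n-1}>0$, hence the viscous and $\beta$-terms have the right sign while $\psi'$ dominates $\mu^n-g^n$. The organizational difference is that the paper inserts an additional continuous-in-time Yosida layer: it first replaces $\gamma$ and $\beta$ by $\gamma_\lambda$ and $\beta_\lambda$ (together with a truncation $T_\lambda$ and an elliptic term $\lambda u$), obtains the $W^{1,\infty}(0,T;H)$-bound for $u_\lambda$ and hence the $L^\infty(0,T;W_0)\hookrightarrow L^\infty(Q)$-bound for $\mu_\lambda$ at the continuous level via the estimates of \cite{bcst1}, and only then time-discretizes \eqref{eq:app-lam} for the maximum principle, treating $\mu_\lambda-g_\lambda$ as a known bounded forcing. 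You instead run all the estimates --- including the second one bounding $(u^n-u^{n-1})/\tau$ in $\ell^\infty(H)$ --- directly at the discrete level; this is more economical but requires checking that the initial-velocity lemma and the $\psi''>-K$ trick survive discretization, which they do because the telescoping only uses the lower bound on $\psi''$, never a bound on $\psi'(u^n)$ itself. For uniqueness the paper simply tests \eqref{1} by $\mu$ and \eqref{2} by $\partial_t u$ and applies Gronwall to $\int_0^t\norm{\partial_t u}_H^2$; your inverse-Laplacian route also works, though the term $\tfrac12\norm{u}_{V_0^*}^2$ in your displayed energy is not actually produced by the testing you describe (and is not needed).
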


\pier{A continuous dependence result follows then.}

\begin{thm}
  \label{contdep} 
Let $\eps>0$ and $\delta >0$.
For any sets of data $(u_{0,i}, g_i)$, $i=1,2,$ satisfying \eqref{pier3}--\eqref{pier4},
let $(u_i, \mu_i, \xi_i)$
denote any corresponding solutions to \eqref{u}--\eqref{3}.
Then, there exists a constant $C_{\eps\delta}$, depending on
the data, such that 
  \begin{align}
   \label{dipcont}
    &\| \mu_1 -\mu_2 \|_{L^2(0,T;V_0)}^2 +   \| u_1 - u_2 \|_{H^1(0,T;H) \cap L^\infty (0,T;V)} ^2
    + \int_Q (\xi_1 - \xi_2 )(\partial_t u_1 - \partial_t u_2) \nonumber \\ 
    &\leq C_{\eps\delta} \left( \| u_{0,1} - u_{0,2} \|_V^2+ \| g_1 -g_2 \|_{L^2(0,T;H)}^2  \right). 
  \end{align}
\end{thm}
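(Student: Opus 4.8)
The plan is to obtain \eqref{dipcont} by reproducing, at the level of differences of the two solutions, the formal energy estimate \eqref{eq:energy_estimate}. Set $u:=u_1-u_2$, $\mu:=\mu_1-\mu_2$, $\xi:=\xi_1-\xi_2$, $u_0:=u_{0,1}-u_{0,2}$ and $g:=g_1-g_2$. Subtracting the two copies of \eqref{1}--\eqref{2}, for a.e.\ $t\in(0,T)$ one has, in $H$,
\[
\partial_t u-\Delta\mu=0,\qquad \mu=\eps\,\partial_t u+\xi-\delta\Delta u+\bigl(\psi'(u_1)-\psi'(u_2)\bigr)+g,
\]
together with $\partial_{\bf n}u=0$ and $\mu=0$ on $\Sigma$ (both $u_i$ lie in $W_{\bf n}$ and both $\mu_i$ in $W_0$ for a.e.\ $t$) and $u(0)=u_0$.

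First I would test the second identity by $\partial_t u(t)\in H$ and integrate by parts in space. Using \eqref{1} and $\mu(t)\in W_0$, the left-hand side becomes $\int_\Omega\mu\,\partial_t u=\int_\Omega\mu\,\Delta\mu=-\|\nabla\mu\|_H^2$; using $\partial_{\bf n}u=0$, the diffusive term gives $-\delta\int_\Omega\Delta u\,\partial_t u=\tfrac\delta2\tfrac{d}{dt}\|\nabla u\|_H^2$; and the monotonicity of $\beta$ (recall $\xi_i\in\beta(\partial_t u_i)$) gives $\int_\Omega\xi\,\partial_t u\ge0$. This produces the identity
\[
\eps\|\partial_t u\|_H^2+\int_\Omega\xi\,\partial_t u+\frac\delta2\frac{d}{dt}\|\nabla u\|_H^2+\|\nabla\mu\|_H^2=-\int_\Omega\bigl(\psi'(u_1)-\psi'(u_2)\bigr)\partial_t u-\int_\Omega g\,\partial_t u.
\]
By \eqref{u-inf} applied to both $u_1$ and $u_2$, their ranges lie in a common compact interval $[a^*,b^*]\subset(a,b)$ depending only on $\eps,\delta$ and the data; since $\psi\in C^2(a,b)$, the function $\psi'$ is Lipschitz on $[a^*,b^*]$ with a constant $L_{\eps\delta}$, whence $|\psi'(u_1)-\psi'(u_2)|\le L_{\eps\delta}|u|$ a.e.\ in $Q$. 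Estimating the two right-hand side terms by Young's inequality then absorbs $\tfrac\eps2\|\partial_t u\|_H^2$ into the left-hand side and leaves a remainder $\le L_{\eps\delta}^2\,\eps^{-1}\|u\|_H^2+\eps^{-1}\|g\|_H^2$.

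Then I would close the estimate by Gronwall's lemma. Integrating over $(0,t)$ and using $\tfrac\delta2\|\nabla u_0\|_H^2\le C_{\eps\delta}\|u_0\|_V^2$ gives
\[
\tfrac\eps2\int_{Q_t}|\partial_t u|^2+\int_{Q_t}\xi\,\partial_t u+\tfrac\delta2\|\nabla u(t)\|_H^2+\int_{Q_t}|\nabla\mu|^2\le C_{\eps\delta}\bigl(\|u_0\|_V^2+\|g\|_{L^2(0,T;H)}^2\bigr)+C_{\eps\delta}\int_0^t\|u(s)\|_H^2\,ds.
\]
Since $u(s)=u_0+\int_0^s\partial_t u$, one has $\|u(s)\|_H^2\le 2\|u_0\|_H^2+2T\int_0^s\|\partial_t u\|_H^2$; plugging this into the line above shows that $\Phi(t):=\int_0^t\|u(s)\|_H^2\,ds$ satisfies $\Phi'(t)\le C_{\eps\delta}\bigl(\|u_0\|_V^2+\|g\|_{L^2(0,T;H)}^2\bigr)+C_{\eps\delta}\Phi(t)$ with $\Phi(0)=0$, so $\Phi(T)\le C_{\eps\delta}\bigl(\|u_0\|_V^2+\|g\|_{L^2(0,T;H)}^2\bigr)$ by Gronwall. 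Reinserting this bound recovers uniform control of $\int_Q|\partial_t u|^2$, of $\int_Q\xi\,\partial_t u$, of $\sup_t\|\nabla u(t)\|_H^2$ and of $\int_Q|\nabla\mu|^2$; together with $\sup_t\|u(t)\|_H^2\le C_{\eps\delta}\bigl(\|u_0\|_V^2+\|g\|_{L^2(0,T;H)}^2\bigr)$ (read off from $\Phi'$) and the Poincaré inequality $\|\mu\|_{V_0}\le C\|\nabla\mu\|_H$, this is precisely \eqref{dipcont}.

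I do not expect a genuine obstacle. The only delicate point — and the reason the constant is allowed to depend on the data — is that $\psi'$ need not be globally Lipschitz, since singular potentials are admitted: the Lipschitz bound for $\psi'(u_1)-\psi'(u_2)$ rests essentially on the a~priori $L^\infty$-bound $a_0'\le u_i\le b_0'$ furnished by Theorem~\ref{thm-prelim}, and $C_{\eps\delta}$ depends on $\eps,\delta$ and on the data only through these bounds, exactly as the statement permits. Apart from this, the computation is essentially the uniqueness argument of Theorem~\ref{thm-prelim} carried out with nonzero data differences; the integrations by parts $\int_\Omega\mu\,\Delta\mu=-\|\nabla\mu\|_H^2$ and $-\int_\Omega\Delta u\,\partial_t u=\int_\Omega\nabla u\cdot\nabla\partial_t u$ and the chain rule $\tfrac{d}{dt}\|\nabla u\|_H^2=2\int_\Omega\nabla u\cdot\nabla\partial_t u$ are all justified by the regularity \eqref{u}--\eqref{mu} of the solutions.
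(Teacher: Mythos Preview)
Your proof is correct and follows essentially the same approach as the paper: take differences, test the first equation by $\mu$ and the second by $\partial_t u$ (equivalently, test the second by $\partial_t u$ and use \eqref{1} to rewrite $\int_\Omega\mu\,\partial_t u$), exploit the $L^\infty$-bound \eqref{u-inf} to get a Lipschitz estimate for $\psi'$, then apply Young's inequality, the identity $u(t)=u_0+\int_0^t\partial_t u$, and Gronwall. The only cosmetic difference is that the paper applies Gronwall directly to $\int_{Q_t}|\partial_t u|^2$ rather than to the auxiliary $\Phi$, but the substance is identical.
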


\pier{At this point, we state our first asymptotic result, keeping $\delta >0$ fixed 
and letting $\eps$ tend to $0$.}

\begin{thm}
  \label{thm1}
  Let $\delta>0$ be fixed and assume that
  \begin{gather}
  \label{ip_eps1}
  u_0 \in \pier{W_{\bf n}}\,, \quad \psi'(u_0)\in H\,, \quad g\in H^1(0,T; H)\,,\quad g(0)\in L^\infty(\Omega)\,,\\
  \label{ip_eps2}
  \exists\,C_1,C_2>0:\quad \psi(r)\geq C_1|r|^2 - C_2 \quad\forall\,r\in D(\psi)\,,\\
  \label{ip_eps3}
  z_0:=-\delta \Delta u_0 + \psi'(u_0) + g(0) \quad\text{is such that}\quad
  \widehat{\beta^{-1}}(-z_0) \in L^1(\Omega)\,,\\
  \label{ip_psi}
  (a,b)=\erre\,, \qquad\exists\,M>0:\quad |\psi''(r)|\leq M(1+|r|^5) \quad\forall\,r\in\erre\,.
  \end{gather}
  Let also $(g_\eps)_\eps\subset H^1(0,T; H)\cap L^\infty(Q)$ fulfill  
  \beq
    \label{conv_g_eps}
    g_\eps(0)=g(0)\,, \qquad
    g_\eps\to g \quad\text{in } H^1(0,T; H)\,.
  \eeq
  Then, if $(u_{\eps},\mu_{\eps},\xi_{\eps})_{\eps>0}$ denotes the unique family
  solving \eqref{u}--\eqref{3} with respect to the data
  $(u_0,g_\eps)$, there exists a triplet $(u,\mu,\xi)$ such that 
  \begin{gather}
    \label{u_lim}
    u \in W^{1,\infty}(0,T; V_0^*)\cap H^1(0,T; V) \cap L^\infty(0,T; W_{\bf n})\,,\\
    \mu \in L^\infty(0,T; V_0)\cap L^2(0,T; H^3(\Omega))\,, \\
    \label{xi_psi_lim}
    \xi \in L^\infty(0,T; H)\,, \qquad \psi'(u)\in L^\infty(0,T; H)\,,\\
    \label{pier6}
    \xi\in\beta(\partial_t u) \quad\text{a.e.~in } Q\,,\\
    \label{1lim}
    \partial_t u(t) - \Delta\mu(t) = 0 \quad\text{for a.e.~}t\in(0,T)\,,\\
    \label{2lim}
    \mu(t)=\xi(t)-\delta\Delta u(t) + \psi'(u(t)) + g(t) \quad\text{for a.e.~}t\in(0,T)\,,\\
    u(0)=u_0\,.
  \end{gather}
  and a sequence $(\eps_n)_n$ such that, as $n\to\infty$, $\eps_n\searrow0$ and
  \begin{gather}
  \label{conv1}
  u_{\eps_n} \wstarto u \quad\text{in } W^{1,\infty}(0,T; V_0^*)\cap L^\infty(0,T; W_{\bf n})\,,\qquad
  u_{\eps_n}\wto u \quad\text{in } H^1(0,T; V)\,,\\
  \mu_{\eps_n} \wstarto\mu \quad\text{in } L^\infty(0,T; V_0)\,,\qquad
  \mu_{\eps_n} \wto \mu \quad\text{in } L^2(0,T; H^3(\Omega))\,,\\
  \label{conv3}
  \xi_{\eps_n} \wstarto \xi \quad\text{in } L^\infty(0,T; H)\,, \qquad 
  \psi'(u_{\eps_n})\wstarto \psi'(u) \quad\text{in } L^\infty(0,T; H)\,,\\
  \eps_n \partial_t u_{\eps_n} \to 0 \quad\text{in } L^\infty(0,T; H)\,.
  \end{gather}
  Furthermore, if instead of \eqref{ip_psi} 
  we assume that
  \beq
  \label{ip_beta}
    D(\beta)=\erre\,, \qquad\exists M>0:\quad |s|\leq M(1+|r|) \quad\forall\,r\in\erre\,,\quad\forall\,s\in\beta(r)\,,
  \eeq
  then the same
  conclusion is true replacing $L^\infty$ with $L^2$ in \eqref{u_lim}, \eqref{xi_psi_lim},
  \eqref{conv1} and \eqref{conv3}.
\end{thm}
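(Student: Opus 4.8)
\emph{Strategy and first estimate.} We work with the triplets $(u_\eps,\mu_\eps,\xi_\eps):=(u_{\eps\delta},\mu_{\eps\delta},\xi_{\eps\delta})$ provided by Theorem~\ref{thm-prelim} relative to the data $(u_0,g_\eps)$ — which satisfy \eqref{pier3}--\eqref{pier4} thanks to \eqref{ip_eps1} and \eqref{conv_g_eps}, with a \emph{fixed} interval since $u_0\in W_{\bf n}\embed C^0(\overline\Omega)$ — and aim at a priori bounds uniform in $\eps$; $\delta>0$ is kept fixed, so all constants may depend on it. Testing \eqref{1} by $\mu_\eps$, \eqref{2} by $-\partial_t u_\eps$, and adding yields \eqref{eq:energy_estimate}: there $\xi_\eps\,\partial_t u_\eps=\widehat\beta(\partial_t u_\eps)+\widehat{\beta^{-1}}(\xi_\eps)\ge0$ and $\psi\ge0$, while $-\int_\Omega g_\eps\,\partial_t u_\eps=-\tfrac{d}{dt}\int_\Omega g_\eps u_\eps+\int_\Omega\partial_t g_\eps u_\eps$ is absorbed using the coercivity \eqref{ip_eps2} (which bounds $\|u_\eps(t)\|_H$ through $\int_\Omega\psi(u_\eps(t))$) and the uniform bound on $g_\eps$ in $H^1(0,T;H)$. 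A Gronwall argument then gives, uniformly in $\eps$, that $u_\eps$ is bounded in $L^\infty(0,T;V)$, $\mu_\eps$ in $L^2(0,T;V_0)$, $\sqrt\eps\,\partial_t u_\eps$ in $L^2(0,T;H)$, $\widehat\beta(\partial_t u_\eps)$ in $L^1(Q)$ and $\psi(u_\eps)$ in $L^\infty(0,T;L^1(\Omega))$.

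\emph{Second estimate — the key use of \eqref{ip_eps3}.} Evaluating \eqref{1}--\eqref{2} at $t=0$ (with $g_\eps(0)=g(0)$) and eliminating $\mu_\eps(0)$ via \eqref{1} shows that $\partial_t u_\eps(0)$ solves an elliptic inclusion which, tested by $\partial_t u_\eps(0)$ and combined with the pointwise Young (in)equality $\xi_\eps(0)\,\partial_t u_\eps(0)=\widehat\beta(\partial_t u_\eps(0))+\widehat{\beta^{-1}}(\xi_\eps(0))$ and $(-z_0)\,\partial_t u_\eps(0)\le\widehat\beta(\partial_t u_\eps(0))+\widehat{\beta^{-1}}(-z_0)$, produces the uniform bound
\[
\|\partial_t u_\eps(0)\|_{V_0^*}^2+\eps\,\|\partial_t u_\eps(0)\|_H^2+\int_\Omega\widehat{\beta^{-1}}(\xi_\eps(0))\ \le\ \int_\Omega\widehat{\beta^{-1}}(-z_0)<\infty,
\]
which is finite precisely by \eqref{ip_eps3}. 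Differentiating \eqref{1}--\eqref{2} in time — rigorously on the time-discretization underlying the proof of Theorem~\ref{thm-prelim}, or through difference quotients — and testing the result by $\partial_t u_\eps$, the contribution of $\partial_t\xi_\eps$ integrates, via the convex chain rule and the Young equality, into $\int_\Omega\bigl[\widehat{\beta^{-1}}(\xi_\eps(t))-\widehat{\beta^{-1}}(\xi_\eps(0))\bigr]\ge-\int_\Omega\widehat{\beta^{-1}}(-z_0)$; the $\psi''$ term is bounded below by $-K\|\partial_t u_\eps\|_H^2$ via \eqref{psi5}, and $\|\partial_t u_\eps\|_H^2$ is split by the compactness estimate \eqref{comp} so that, the parameter being chosen small, the $\|\nabla\partial_t u_\eps\|_H^2$ part is absorbed into the $\delta$-dissipation and the remainder handled by Gronwall together with \eqref{conv_g_eps}. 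One concludes, uniformly in $\eps$, that $\partial_t u_\eps$ is bounded in $L^2(0,T;V)\cap L^\infty(0,T;V_0^*)$, $\sqrt\eps\,\partial_t u_\eps$ in $L^\infty(0,T;H)$, and $\widehat{\beta^{-1}}(\xi_\eps)$ in $L^\infty(0,T;L^1(\Omega))$. By \eqref{1}, $\mu_\eps$ is the inverse Dirichlet Laplacian of $\partial_t u_\eps$, hence bounded in $L^\infty(0,T;V_0)$ and, by elliptic regularity, in $L^2(0,T;H^3(\Omega))$, while $\eps\,\partial_t u_\eps\to0$ in $L^\infty(0,T;H)$. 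Finally, rewriting \eqref{2} as $-\delta\Delta u_\eps+\psi'(u_\eps)+\xi_\eps=\mu_\eps-\eps\,\partial_t u_\eps-g_\eps$, whose right-hand side is bounded in $L^\infty(0,T;H)$, and invoking the growth assumption \eqref{ip_psi} on $\psi''$ together with elliptic regularity and the monotonicity of $\gamma$ in \eqref{pier1}, one upgrades this to uniform bounds for $u_\eps$ in $L^\infty(0,T;W_{\bf n})$ and for $\xi_\eps$, $\psi'(u_\eps)$ in $L^\infty(0,T;H)$. If \eqref{ip_psi} is replaced by \eqref{ip_beta}, the linear growth of $\beta$ applied to $\partial_t u_\eps\in L^2(0,T;V)$ only gives $\xi_\eps$ bounded in $L^2(0,T;H)$; testing \eqref{2} by $-\Delta u_\eps$ and using \eqref{psi5} then yields $u_\eps$ bounded in $L^2(0,T;W_{\bf n})$ and $\psi'(u_\eps)$ in $L^2(0,T;H)$, i.e.\ the same picture with every $L^\infty(0,T;\cdot)$ weakened to $L^2(0,T;\cdot)$.

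\emph{Compactness and passage to the limit.} By Banach--Alaoglu there is $\eps_n\searrow0$ realizing the convergences \eqref{conv1}--\eqref{conv3}. From $u_\eps$ bounded in $L^\infty(0,T;V)$ and $\partial_t u_\eps$ in $L^2(0,T;H)$, the Aubin--Lions--Simon lemma gives $u_{\eps_n}\to u$ in $C^0([0,T];H)$, hence (along a further subsequence) a.e.\ in $Q$; as $\psi'$ is continuous and $\psi'(u_{\eps_n})$ is bounded in $L^\infty(0,T;H)$ — resp.\ $L^2(0,T;H)$ under \eqref{ip_beta} — it follows that $\psi'(u_{\eps_n})\wstarto\psi'(u)$ in $L^\infty(0,T;H)$ (resp.\ $\wto$ in $L^2(0,T;H)$). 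Letting $n\to\infty$ in the linear equation \eqref{1} gives \eqref{1lim}; in \eqref{2}, using $\eps_n\partial_t u_{\eps_n}\to0$, $\mu_{\eps_n}\wstarto\mu$, $-\delta\Delta u_{\eps_n}\to-\delta\Delta u$ weakly, $\psi'(u_{\eps_n})\to\psi'(u)$ and $g_{\eps_n}\to g$, gives \eqref{2lim}; and $u(0)=u_0$ follows from $u_{\eps_n}(0)=u_0$ and $u_{\eps_n}\to u$ in $C^0([0,T];H)$. The regularity \eqref{u_lim}--\eqref{xi_psi_lim} is then read off from the uniform bounds.

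\emph{Identification $\xi\in\beta(\partial_t u)$, and the main obstacle.} It remains to prove \eqref{pier6}. Write the energy \emph{identity} for the $\eps$-problem over $[0,T]$,
\[
\mathcal F(u_\eps(T))-\mathcal F(u_0)+\int_0^T\!\|\nabla\mu_\eps\|_H^2+\eps\!\int_0^T\!\|\partial_t u_\eps\|_H^2+\int_Q\xi_\eps\,\partial_t u_\eps=-\int_Q g_\eps\,\partial_t u_\eps,
\]
solve it for $\int_Q\xi_\eps\,\partial_t u_\eps$ and take $\limsup_n$: using weak lower semicontinuity of $\|\nabla\mu\|_{L^2(Q)}^2$ and of $\mathcal F$ at time $T$ (note $u_{\eps_n}(T)\wto u(T)$ in $V$ since $u_\eps$ is bounded in $H^1(0,T;V)$ uniformly, and $u_{\eps_n}(T)\to u(T)$ in $H$), the nonnegativity of $\eps\int_0^T\|\partial_t u_\eps\|_H^2$, and $\int_Q g_{\eps_n}\partial_t u_{\eps_n}\to\int_Q g\,\partial_t u$ (product of a strongly and a weakly convergent sequence in $L^2(Q)$, by \eqref{conv_g_eps}), one gets $\limsup_n\int_Q\xi_{\eps_n}\partial_t u_{\eps_n}\le\mathcal F(u_0)-\mathcal F(u(T))-\int_0^T\|\nabla\mu\|_H^2-\int_Q g\,\partial_t u$. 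On the other hand, testing \eqref{1lim} by $\mu$ and \eqref{2lim} by $-\partial_t u$ and adding produces exactly $\int_0^T\|\nabla\mu\|_H^2+\int_Q\xi\,\partial_t u+\mathcal F(u(T))-\mathcal F(u_0)=-\int_Q g\,\partial_t u$, whence $\limsup_n\int_Q\xi_{\eps_n}\partial_t u_{\eps_n}\le\int_Q\xi\,\partial_t u$. Since $\xi_{\eps_n}\wto\xi$ and $\partial_t u_{\eps_n}\wto\partial_t u$ in $L^2(Q)$ with $\xi_{\eps_n}\in\beta(\partial_t u_{\eps_n})$, the standard maximal-monotonicity lemma (see \cite{barbu-monot}) yields $\xi\in\beta(\partial_t u)$ a.e.\ in $Q$, i.e.\ \eqref{pier6}; the alternative conclusion under \eqref{ip_beta} follows by running the same scheme with the $L^2(0,T;\cdot)$ bounds in place of the $L^\infty(0,T;\cdot)$ ones. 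I expect the real work to lie in the second estimate: making the time differentiation rigorous on the discrete scheme, and the bootstrap — from the control of $\partial_t u_\eps(0)$ and $\widehat{\beta^{-1}}(\xi_\eps(0))$ furnished by \eqref{ip_eps3}, through \eqref{ip_psi} or \eqref{ip_beta} and elliptic theory — up to the uniform bounds on $u_\eps$ in $L^\infty(0,T;W_{\bf n})$ (resp.\ $L^2(0,T;W_{\bf n})$) and on $\xi_\eps$, $\psi'(u_\eps)$; by contrast, the compactness and the monotone identification of $\xi$ are comparatively routine.
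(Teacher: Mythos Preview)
Your overall architecture matches the paper's: the first energy estimate, the second estimate via time-differentiation with initial control from \eqref{ip_eps3}, the compactness argument, and the $\limsup$ identification of $\xi\in\beta(\partial_t u)$ through the energy identity are all carried out in the paper essentially as you describe. The passage to the limit and the treatment of the alternative hypothesis \eqref{ip_beta} are also the same.

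There is, however, a genuine gap in your ``third estimate'' under \eqref{ip_psi}. You write that from
\[
-\delta\Delta u_\eps+\psi'(u_\eps)+\xi_\eps=\mu_\eps-\eps\,\partial_t u_\eps-g_\eps
\quad\text{bounded in }L^\infty(0,T;H),
\]
one can upgrade, ``invoking the growth assumption \eqref{ip_psi} \ldots together with elliptic regularity and the monotonicity of $\gamma$'', to separate bounds on $u_\eps$ in $L^\infty(0,T;W_{\bf n})$ and on $\xi_\eps,\psi'(u_\eps)$ in $L^\infty(0,T;H)$. This does not follow: the sum of three terms being bounded in $H$ says nothing about each summand, and neither testing by $-\Delta u_\eps$ nor by $\gamma(u_\eps)$ disposes of the cross-term $\int\xi_\eps(-\Delta u_\eps)$ (respectively $\int\xi_\eps\gamma(u_\eps)$), which has no sign. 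The growth bound $|\psi''|\le M(1+|r|^5)$ is not the missing ingredient here.

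The paper closes this gap by testing \eqref{2} with the specific combination $-\delta\Delta\partial_t u_\eps+\partial_t\gamma(u_\eps)$. The point is that both pairings with $\xi_\eps$ then acquire a sign: formally $\int\nabla\xi_\eps\cdot\nabla\partial_t u_\eps=\int\beta'(\partial_t u_\eps)|\nabla\partial_t u_\eps|^2\ge0$ (made rigorous via Yosida), and $\int\gamma'(u_\eps)\,\xi_\eps\,\partial_t u_\eps\ge0$ since $\gamma'\ge0$ and $\xi_\eps\partial_t u_\eps\ge0$. The left-hand side produces $\tfrac12\frac{d}{dt}\|-\delta\Delta u_\eps+\gamma(u_\eps)\|_H^2$; on the right, the only delicate term is $\int_Q\mu_\eps\,\gamma'(u_\eps)\,\partial_t u_\eps$, and it is precisely here that \eqref{ip_psi} is used, via
\[
\int_0^T\|\mu_\eps\|_{L^\infty(\Omega)}\|\gamma'(u_\eps)\|_{L^{6/5}(\Omega)}\|\partial_t u_\eps\|_{L^6(\Omega)}
\le c\,\|\mu_\eps\|_{L^2(0,T;H^3)}\|\partial_t u_\eps\|_{L^2(0,T;V)}\bigl(1+\|u_\eps\|_{L^\infty(0,T;L^6)}^5\bigr),
\]
all factors being controlled by the second estimate. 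Gronwall then gives $-\delta\Delta u_\eps+\gamma(u_\eps)$ bounded in $L^\infty(0,T;H)$, and only \emph{after} this does one test by $-\Delta u_\eps$ and use $\gamma'\ge0$ to split off $\|\Delta u_\eps\|_H$, recovering $u_\eps\in L^\infty(0,T;W_{\bf n})$, $\gamma(u_\eps)\in L^\infty(0,T;H)$, and $\xi_\eps\in L^\infty(0,T;H)$ by comparison. Your proposal is missing this test function; without it the bootstrap you announce cannot be completed.
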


\begin{rmk}\label{rmk_eps}
  Let us comment on the construction of a possible family $(g_\eps)_\eps$
  satisfying \eqref{conv_g_eps}. Since $g(0)\in L^\infty(\Omega)$, 
  for instance one can choose
  $g_\eps:=T_\eps(g)$, where $T_\eps:\erre\to\erre$ is the usual
  truncation operator at level $1/\eps$, i.e., $T_\eps(r):=\max\{\min\{r,\pier{1/\eps\},-1/\eps}\}$ for $r\in\erre$.
  Indeed, it is not difficult to check that $g_\eps(0)=g(0)$ provided that $\frac1\eps>\norm{g(0)}_{L^\infty(\Omega)}$
  and $g_\eps\to g$ in $H^1(0,T; H)$.
\end{rmk}

\pier{The second asymptotic result investigates the behavior of the system as $\delta \searrow 0$. In this case, we can prove the convergence of the whole sequence and even an error estimate in terms of $\delta$ (see~\eqref{errorest}).}

\begin{thm}
  \label{thm2}
 Let $\eps>0$ be fixed. Assume
 \begin{gather}
  \label{ip_delta1}
  u_0 \in H\,, \qquad \exists\,[a_0,b_0]\subseteq(a,b):\;a_0\leq u_0\leq b_0 \quad\text{a.e.~in } \Omega\,,\\
  \label{ip_delta2}
  g\in H^1(0,T; H)\cap L^\infty(Q)\,.
  \end{gather}
  Let $(u_{0\delta})_\delta\subset \pier{W_{\bf n}}$ and 
  $(g_\delta)_\delta\subset H^1(0,T; H)\cap L^2(0,T; V)\cap L^\infty(Q)$ be such that
  \begin{gather}
  \label{ip_delta3}
  a_0\leq u_{0\delta}\leq b_0 \quad\text{a.e.~in } \Omega\,,\\
  \label{ip_delta4}
  \delta^{1/2}\norm{\nabla u_{0\delta}}^2
  +\delta^{3/2}\norm{\Delta u_{0\delta}}_H^2 
  +\norm{\psi'(u_{0\delta})}_H^2\leq C\,,\\ 
  \label{ip_delta5}
  \norm{g_\delta}_{H^1(0,T; H)\cap L^\infty(Q)} + \delta^{1/2}\norm{g_\delta}^2_{L^2(0,T; V)}\leq C\,,\\
  \label{ip_delta6}
  u_{0,\delta} \to u_0 \quad\text{in } H\,, \qquad
  g_\delta \to g \quad\text{in } H^1(0,T; H)\,.
  \end{gather}
  Then, if $(u_{\delta},\mu_{\delta},\xi_{\delta})_{\delta>0}$ denotes the unique family
  solving \eqref{u}--\eqref{3} with respect to the data $(u_{0\delta}, g_\delta)$, 
  there \pier{exist} a triplet $(u,\mu,\xi)$ \pier{and an interval} $[a_0', b_0']\subset(a,b)$ such that
\begin{gather}
  \label{u_lim'}
  u \in W^{1,\infty}(0,T; H)\cap L^\infty(Q)\,,\qquad
  a_0'\leq u \leq b_0' \quad\text{a.e.~in } Q\,,\\
  \label{mu_lim'}
  \mu \in L^\infty(0,T; W_0)\,,\\
  \label{xi_lim'}
  \xi \in L^\infty(0,T; H)\,, \qquad \psi'(u) \in L^\infty(Q)\,,\\
  \xi \in \beta(\partial_t u) \qquad\text{a.e.~in } Q\,,\\
  \label{1lim'}
  \partial_t u(t) - \Delta\mu(t) =0  \quad\text{for a.e.~}t\in(0,T)\,,\\
  \label{2lim'}
  \mu(t)=\eps\partial_t u(t) + \xi(t) + \psi'(u(t)) - g(t) \quad\text{for a.e.~}t\in(0,T)\,,\\
  u(0)=u_0\,
\end{gather}
and, as $\delta\searrow0$,
\begin{gather*}
  u_\delta \wstarto u \quad\text{in } W^{1,\infty}(0,T; H)\cap L^\infty(Q)\,,\qquad
  u_\delta \to u \quad\text{in } H^1(0,T; H)\,,\\
  \mu_\delta \wstarto \mu \quad\text{in } L^\infty(0,T; W_0)\,, \qquad
  \mu_\delta\to \mu \quad\text{in } L^2(0,T; V_0)\,,\\
  \psi'(u_\delta) \wstarto \psi'(u) \quad\text{in } L^\infty(Q)\,, \qquad
  \psi'(u_\delta) \to \psi'(u) \quad\text{in } L^2(0,T; H)\,,\\
  \xi_\delta \wstarto\xi \quad\text{in } L^\infty(0,T; H)\,,\\
  \delta^{1/2}u\to 0 \quad\text{in } H^1(0,T; V)\,, \qquad
  \delta u_\delta \to 0 \quad\text{in } L^\infty(0,T; W_{\bf n})\,.
\end{gather*}
In particular, there exists a constant $M>0$, independent of $\delta$, such that 
\begin{align}
\label{errorest}
  &\norm{\mu_\delta-\mu}_{L^2(0,T; V_0)} + 
  \norm{u_\delta-u}_{H^1(0,T; H)}\nonumber \\
  &\qquad\leq
  M\left(\delta^{1/4} + \norm{u_{0\delta}-u_0}_H
  +\norm{g_\delta-g}_{L^2(0,T; H)}\right)\,.
\end{align}
\end{thm}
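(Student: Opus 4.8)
\textbf{Proof plan for Theorem~\ref{thm2}.}
The plan is to derive a family of $\delta$-uniform a priori estimates on the solutions $(u_\delta,\mu_\delta,\xi_\delta)$ to \eqref{u}--\eqref{3} with data $(u_{0\delta},g_\delta)$, pass to the limit by compactness and monotonicity, and finally promote the qualitative convergence to the quantitative rate \eqref{errorest} through a stability-type argument. First I would revisit the basic energy estimate: testing \eqref{1} by $\mu_\delta$, \eqref{2} by $-\partial_t u_\delta$ and adding (as in \eqref{eq:energy_estimate}) gives control, uniformly in $\delta$, of $\norm{\nabla\mu_\delta}_{L^2(Q)}$, of $\eps^{1/2}\norm{\partial_t u_\delta}_{L^2(Q)}$, of $\widehat\gamma(u_\delta)$ in $L^\infty(0,T;L^1(\Omega))$, and — crucially — of $\delta^{1/2}\norm{\nabla u_\delta}_{L^\infty(0,T;H)}$; here \eqref{ip_delta4}--\eqref{ip_delta5} are exactly what is needed to bound the initial free energy uniformly and to control the source term $-\int_Q g_\delta\partial_t u_\delta$. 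Then, to recover the essential boundedness $a_0'\le u\le b_0'$ in the limit, I would re-run the nonstandard maximum-principle argument of Theorem~\ref{thm-prelim} (time-discretization of the doubly nonlinear equation) while tracking the dependence of $a_0',b_0'$ on $\delta$: the point is that the barriers produced there depend only on $[a_0,b_0]$, on $\norm{g_\delta}_{L^\infty(Q)}$ and on the growth of $\psi'$ near $a,b$, none of which degenerate as $\delta\searrow0$, so $[a_0',b_0']$ can be chosen independent of $\delta$. This immediately yields $u_\delta$ bounded in $L^\infty(Q)$ and $\psi'(u_\delta)$ bounded in $L^\infty(Q)$ uniformly.

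Next I would upgrade the time regularity. Differentiating \eqref{2} formally in time (rigorously, working on the time-discrete scheme), testing by $\partial_t u_\delta$ and using $\psi''\ge -K$ together with the monotonicity of $\beta$ and the already-established $L^\infty$-bound on $\psi'(u_\delta)$ and on $u_\delta$, one obtains $\partial_t u_\delta$ bounded in $L^\infty(0,T;H)$ and $\delta^{1/2}\partial_t u_\delta$ bounded in $L^2(0,T;V)$, using \eqref{ip_delta4}--\eqref{ip_delta5} to absorb the contributions of $\partial_t g_\delta$ and of the initial data. Writing \eqref{2} as $-\delta\Delta u_\delta=\mu_\delta-\eps\partial_t u_\delta-\xi_\delta-\psi'(u_\delta)+g_\delta$ and comparing, elliptic regularity then gives $\delta u_\delta$ bounded in $L^\infty(0,T;W_{\bf n})$; from \eqref{1}, $-\Delta\mu_\delta=\partial_t u_\delta$ is bounded in $L^\infty(0,T;H)$ with $\mu_\delta\in V_0$, hence $\mu_\delta$ is bounded in $L^\infty(0,T;W_0)$, and then \eqref{2} shows $\xi_\delta$ is bounded in $L^\infty(0,T;H)$, which in turn, via $\xi_\delta\in\beta(\partial_t u_\delta)$ and Young's inequality with $\widehat{\beta^{-1}}$, is consistent with everything. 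With these bounds, Banach--Alaoglu and Aubin--Lions extract a (not relabelled) subsequence with the convergences listed in the statement: in particular $u_\delta\to u$ strongly in $H^1(0,T;H)$ (actually in $C([0,T];H)$) by compactness, $\delta^{1/2}\nabla u_\delta\to 0$ and $\delta u_\delta\to 0$ strongly since $\delta^{1/2}$ and $\delta$ tend to $0$ against uniformly bounded quantities, and the nonlinear terms pass to the limit: $\psi'(u_\delta)\to\psi'(u)$ by continuity and dominated convergence (using the uniform $L^\infty$-bound), and $\xi\in\beta(\partial_t u)$ by the standard weak-strong closedness of maximal monotone graphs, since $\xi_\delta\rightharpoonup\xi$ in $L^2(Q)$ while $\partial_t u_\delta\to\partial_t u$ strongly in $L^2(Q)$ — here the strong $L^2$-convergence of $\partial_t u_\delta$ is what makes $\limsup\int_Q\xi_\delta\partial_t u_\delta\le\int_Q\xi\partial_t u$ available. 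Passing to the limit in \eqref{1}--\eqref{3} is then routine, and uniqueness of the limit problem (which, being of the type $\delta=0$, follows from a contraction estimate à la \cite{BCT}, or just from \eqref{errorest} itself applied with identical data) shows the whole family converges, not merely a subsequence.

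Finally, for the error estimate \eqref{errorest} I would compare $(u_\delta,\mu_\delta,\xi_\delta)$ directly with the limit $(u,\mu,\xi)$. Subtracting the two systems, the difference $\bar u:=u_\delta-u$, $\bar\mu:=\mu_\delta-\mu$, $\bar\xi:=\xi_\delta-\xi$ satisfies $\partial_t\bar u-\Delta\bar\mu=0$ and $\bar\mu=\eps\partial_t\bar u+\bar\xi+(\psi'(u_\delta)-\psi'(u))-\delta\Delta u_\delta-(\bar g)$ with $\bar g:=g_\delta-g$; testing the first by $\mathcal N\bar u$ (the inverse Neumann Laplacian, or equivalently working in $V_0^*$) or, more directly, testing the first equation by $\bar\mu$ and the second by $-\partial_t\bar u$ and adding, one controls $\norm{\nabla\bar\mu}_{L^2}$ and $\eps^{1/2}\norm{\partial_t\bar u}_{L^2}$ in terms of $\int_Q(\psi'(u)-\psi'(u_\delta))\partial_t\bar u$, of $\delta\int_Q\Delta u_\delta\,\partial_t\bar u$ and of $\int_Q\bar g\,\partial_t\bar u$; the first integral is handled by $\psi''\ge -K$ plus Gronwall, the source term by Young, and the term $\delta\int_Q\Delta u_\delta\,\partial_t\bar u$ is bounded by $\delta\norm{\Delta u_\delta}_{L^2(Q)}\norm{\partial_t\bar u}_{L^2(Q)}\le C\delta^{1/2}\cdot\delta^{1/2}\norm{\Delta u_\delta}_{L^2(Q)}\cdot\norm{\partial_t\bar u}_{L^2(Q)}$, and since $\delta^{1/2}\norm{\Delta u_\delta}_{L^\infty(0,T;H)}\le C$ by \eqref{ip_delta4} and the estimates above, this is $\le C\delta^{1/2}\norm{\partial_t\bar u}_{L^2(Q)}$, absorbed after one more Young inequality; the leftover $\delta^{1/2}$ on the right gives the $\delta^{1/4}$ in \eqref{errorest} after taking square roots. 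The remaining monotone contribution $\int_Q\bar\xi\,\partial_t\bar u\ge0$ sits harmlessly on the left. I expect the main obstacle to be the rigorous derivation of the $L^\infty(Q)$-bound on $u_\delta$ and $\psi'(u_\delta)$ uniformly in $\delta$: this requires re-examining the time-discrete maximum-principle argument of Theorem~\ref{thm-prelim} with explicit, $\delta$-free barriers, and handling the interplay between the $\delta$-diffusion (which for fixed $\delta$ actually helps the maximum principle) and the $\eps$-viscosity; everything downstream — the time-derivative estimate, the identification of $\beta(\partial_t u)$, and especially the $\delta^{1/2}\norm{\Delta u_\delta}$ control used in \eqref{errorest} — leans on that bound.
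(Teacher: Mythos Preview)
Your overall strategy matches the paper's, but the ordering of your estimates contains a genuine gap. You claim that the barriers $a_0',b_0'$ from the maximum-principle argument of Theorem~\ref{thm-prelim} depend only on $[a_0,b_0]$, on $\norm{g_\delta}_{L^\infty(Q)}$, and on the growth of $\psi'$. This is not correct: if you revisit Section~\ref{sec:prelim}, the constant $M$ that determines $\bar a,\bar b$ (and hence $a_0',b_0'$) is a bound on $\norm{\mu_\lambda - g_\lambda}_{L^\infty(Q)}$, see \eqref{est_inf}. So the maximum principle can only be made $\delta$-uniform \emph{after} you have proved $\mu_\delta$ bounded in $L^\infty(0,T;W_0)\hookrightarrow L^\infty(Q)$ uniformly in $\delta$. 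The paper therefore runs the time-differentiated (``second'') estimate \emph{first}---and that estimate does \emph{not} require the $L^\infty$-bound on $u_\delta$: it uses only $\psi''\ge -K$ (equivalently $\gamma'\ge 0$), the initial-value lemma, and Gronwall on $\frac\eps2\norm{\partial_t u_\delta(t)}_H^2$. Only then, with $\norm{\mu_\delta}_{L^\infty(0,T;W_0)}\le c$ in hand, is the maximum principle applied to obtain $a_0'\le u_\delta\le b_0'$ independently of $\delta$. Your order (maximum principle, then time-differentiated estimate) is circular as written.

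A second, related gap: you assert that $u_\delta\to u$ strongly in $H^1(0,T;H)$ ``by compactness'', and then use strong $L^2$-convergence of $\partial_t u_\delta$ to identify $\xi\in\beta(\partial_t u)$. But there is no uniform spatial regularity on $u_\delta$ (only $\delta^{1/2}u_\delta$ is bounded in $L^\infty(0,T;V)$), so neither Aubin--Lions nor any standard compactness gives strong convergence of $\partial_t u_\delta$ in $L^2(Q)$. The paper resolves this by proving a Cauchy estimate in $\delta,\delta'$ (subtract the systems for two indices, test by $\mu_\delta-\mu_{\delta'}$ and $\partial_t(u_\delta-u_{\delta'})$, use the Lipschitz bound on $\psi'$ restricted to $[a_0',b_0']$, and Gronwall); this yields strong convergence in $H^1(0,T;H)$, allows identification of $\beta$, and---by sending $\delta'\searrow0$---produces \eqref{errorest} directly. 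Your scheme of comparing $u_\delta$ with the limit $u$ \emph{after} having identified the limit problem is circular, since that identification already required the strong convergence you are trying to prove. Finally, a minor point: the paper obtains only $\delta^{3/4}\norm{\Delta u_\delta}_{L^\infty(0,T;H)}\le c$ (not $\delta^{1/2}$ as you claim), via testing \eqref{2} by $-\delta^{1/2}\Delta\partial_t u_\delta$; this is exactly what produces the $\delta^{1/4}$ rate in \eqref{errorest} after Young.
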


\begin{rmk}
\pier{Note that the limit problem with $\delta=0$ admits a unique solution, as it is proved in  \cite[Theorem~2.1]{BCT}. This result, and in particular \cite[estimate~(2.9)]{BCT}, are related to the error estimate \eqref{errorest} stated here and can be compared with 
the continuous dependence estimate \eqref{dipcont} for $\varepsilon,\delta>0$. 
Actually, we point out that here\pier{,
in order to prove Theorem~\ref{contdep},} we are using some stronger assumptions on the initial datum depending on the fact that we deal with spatial regularity for 
$\delta>0$.}
\end{rmk}

\begin{rmk}
  Let us show that, under the assumptions \eqref{ip_delta1}--\eqref{ip_delta2},
  two sequences $(u_{0\delta})_\delta$ and $(g_\delta)_\delta$
  with the properties above always exist. \pier{Specifically, to construct them
  it is possible to employ} a singular perturbation technique. 
  \luca{Indeed, we could introduce the solution $u_{0\delta}$ of the \pier{elliptic problem}
  \begin{equation}\label{ell1}
    \begin{cases}
  u_{0\delta}-\delta^{1/2}\Delta u_{0\delta}=u_0 \quad&\text{in } \Omega\,,\\
  \partial_{\bf n}u_{0\delta}=0 \quad&\text{\pier{on} } \Gamma
  \end{cases}
  \end{equation}
  and \pier{let $g_\delta$ be} the solution of
  \begin{equation}
  \pier{\begin{cases}
   g_\delta (t) -\delta^{1/2}\Delta g_\delta (t)=g (t) \quad&\text{in } \Omega\,,\\
  \partial_{\bf n}g_\delta (t) = 0 \quad&\text{\pier{on} } \Gamma\,
  \end{cases}}\label{ell2}
  \end{equation}}%
\pier{for all $t\in [0,T].$ Then, \eqref{ip_delta3} follows from \eqref{ip_delta1} and the maximum principle, while \eqref{ip_delta4} can be shown by testing the equation in \eqref{ell1} by $u_{0\delta}$ and subsequently comparing the terms and recalling the assumption \eqref{psi1}. Also, the verification of  \eqref{ip_delta5} and \eqref{ip_delta6} is not difficult, in particular for \eqref{ip_delta6} one can take advantage of the properties
$$ \limsup_{\delta \searrow 0} \norm{u_{0\delta}}_{H}^2 \leq \norm{u_{0}}_{H}^2 , 
\quad \ \limsup_{\delta \searrow 0} \norm{g_\delta}_{H^1(0,T; H)}^2 \leq \norm{g}_{H^1(0,T; H)}^2.
$$}
\end{rmk}

\begin{rmk}
The regularities $u_0\in V$ and $g\in L^2(0,T; V)$ imply $u\in H^1(0,T; V)$
also for $\delta=0$. Indeed, as it is discussed in 
in \cite[Remark~5.1]{BCT} we can formally take the gradient of the equation \eqref{2lim'}
and test it by $\partial_t u$: using the Lipschitz continuity of the operator 
\pier{$(I+\beta)^{-1}$ (where $I$ denotes the identity) and the Gronwall lemma, 
it is straightforward to infer that $ u\in H^1(0,T; V)$} (see~\cite[Remark~5.1]{BCT} for details).
\end{rmk}


\section{Proof of Theorems~\ref{thm-prelim}--\ref{contdep}}\label{sec:prelim}
\setcounter{equation}{0}

This section is devoted to the proof of the above mentioned results.

\subsection{The existence result}
We focus here on the proof of Theorem~\ref{thm-prelim}.
The main idea is to approximate the problem as in \cite{bcst1} and to show 
that the approximated solutions satisfy further refined uniform estimates.
As $\delta$ and $\eps$ are fixed positive numbers in this section, 
we shall consider with no restriction that $\eps=\delta=1$. Moreover,
in order to simplify the presentation,
we shall avoid the subscripts $\eps$ and $\delta$ for $g$ and $u_0$.

Let now $(g_\lambda)_{\lambda\in(0,1)}\subseteq  H^1(0,T; H)\cap L^2(0,T; V)\cap L^\infty(Q)$ such that
\[
  g_\lambda \to g \quad\text{in } H^1(0,T; H) \quad\text{as } \lambda\searrow0\,,
  \qquad 
  \norm{g_\lambda}_{L^\infty(Q)}\leq\norm{g}_{L^\infty(Q)}\quad\forall\,\lambda\in(0,1)\,.
\]
For example, one can take \pier{(cf. \eqref{ell2})} $g_\lambda$ as the unique solution 
to the elliptic problem
\[
  \begin{cases}
  g_\lambda-\lambda\Delta g_\lambda = g \quad&\text{in } \Omega\,,\\
  \partial_{\bf n}g_\lambda = 0 \quad&\text{\pier{on} } \Gamma\,.
  \end{cases}
\]
Furthermore, denote by $T_\lambda:\erre\to\erre$ the truncation operator at level \pier{$1/\lambda$}, 
already defined in Remark~\ref{rmk_eps}.
Then, reasoning as in\cite{bcst1} we know that there exist a unique pair $(u_\lambda,\mu_\lambda)$ such that 
\begin{gather}
    \label{u_lambda}
    u_\lambda \in C^1([0,T]; H)\cap H^1(0,T; V)  \cap C^0([0,T]; W_{\bf n}) \cap L^2(0,T; H^3(\Omega)) \,,\\
    \label{mu_lambda}
      \mu_\lambda \in C^0([0,T]; W_0)\cap L^2(0,T; H^3(\Omega))
\end{gather}
and, for every $t\in[0,T]$,
\begin{align}
    \label{eq1_approx}
      &\partial_t u_\lambda(t)-\Delta\mu_\lambda(t)=0\,,\\ 
      \nonumber
      &\mu_\lambda(t)=\partial_t u_\lambda(t) +\beta_\lambda(\partial_t u_\lambda(t))-\Delta u_\lambda(t)
      			+\lambda u_\lambda(t) \\
			\label{eq2_approx}   
                         &\qquad\quad+\gamma_\lambda(u_\lambda(t)) 
                         -KT_\lambda(I+\lambda\gamma)^{-1}(u_\lambda(t))+ g_\lambda(t) \,,\\
       \label{eq3_approx}
       &u_\lambda(0)=u_0\,,                    
\end{align}
where $\gamma$ is defined in \eqref{pier1} and $\gamma_\lambda, \beta_\lambda$
denote the Yosida approximations of the maximal monotone graphs $\gamma$ and $\beta$,
respectively.
\luca{Note that \eqref{eq1_approx}--\eqref{eq3_approx} is indeed an approximation
of the original system \eqref{1}--\eqref{3} in the following sense.
The term $\lambda u_\lambda$ represents a (small)
elliptic regularization that is going to vanish \pier{as} $\lambda\searrow0$.
\pier{Moreover}, since $T_\lambda$ and $(I+\lambda\gamma)^{-1}$
converge to the identity in $(a,b)$, the contribution 
$-KT_\lambda(I+\lambda\gamma)^{-1}(u_\lambda)$ represents an approximation
of $-Ku$, hence the terms
$\gamma_\lambda(u_\lambda) 
-KT_\lambda(I+\lambda\gamma)^{-1}(u_\lambda)$ provide
an approximation of $\pier{\psi'}(u)$.}

\luca{The first estimates can be obtained with 
no additional effort from the arguments in \cite[\S~5.1--5.2]{bcst1}
and owing to the Lipschitz-continuity of 
$T_\lambda$ and $(I+\lambda\gamma)^{-1}$ on $\erre$.
In particular, we can test \eqref{eq1_approx} by $\mu_\lambda$, 
\eqref{eq2_approx} by $\partial_t u_\lambda$, and sum.
Secondly, we can also (formally)
test \eqref{eq1_approx} by $\partial_t \mu_\lambda$, 
the time derivative of \eqref{eq2_approx} by $\partial_t u_\lambda$, and sum.}
\pier{Then, by also comparing the terms in \eqref{eq1_approx} 
and using the elliptic regularity theory (as in \cite[\S~5.1--5.2]{bcst1}),} 
it is readily seen that 
\beq\label{est-lam}
  \norm{u_\lambda}_{W^{1,\infty}(0,T; H)\cap H^1 (0,T;V)} +
  \norm{\mu_\lambda}_{L^\infty(0,T; W_0)\cap L^2(0,T; H^3(\Omega))}\leq c
\eeq
for a positive constant $c$, independent of $\lambda$.

We show now that $u_\lambda$ satisfies also an $L^\infty$-estimate
by proving a maximum principle
that arises from a time-discretization of the approximated problem.
We shall need the following result, \pier{for which we refer to}~\cite[Prop.~11.6]{roub}.
\begin{prop}\label{roub}
Let $\Phi:V\to[0,+\infty]$ and $\Xi:H\to\erre$ be proper, convex,
lower semicontinuous, and assume
that there exist $c_0, c_1, c_2>0$ such that
\begin{align*}
  &\ip{w}{v}\geq c_0\norm{v}_H^2 &&\forall\,v\in H\,,\quad\forall\,w\in\partial\Xi(v)\,,\\
  &\norm{w}_H\leq c_1(1+\norm{v}_H)
  &&\forall\,v\in H\,,\quad\forall\,w\in\partial\Xi(v)\,,\\
  &\ip{w}{v}\geq c_2\norm{v}_V^2 &&\forall\,v\in V\,,\quad\forall\,w\in\partial\Phi(v)\,.
\end{align*}
Set $A_1:=\partial\Phi$, let $A_2:H\to H$ be Lipschitz-continuous and define $A:=A_1+A_2$.
Moreover, let $f\in L^2(0,T; H)$ and $v_0\in V\cap D(\Phi)$. 
For every $N\in\enne$ sufficiently large, we set $\tau:=T/N$ and consider the discretized problem
\beq\label{rothe-eq}
  \partial\Xi\left(\frac{v_\tau^{k}-v_\tau^{k-1}}{\tau}\right) + A(v_\tau^k)\ni f^k\,,
  \quad k=1,\ldots,N\,, \qquad v_\tau^0=v_0\,,
\eeq
with
\[
  f^k=\frac1\tau\int_{(k-1)\tau}^{k\tau}f(s)\,ds\,, \quad k=1,\ldots,N\,.
\]
Then, problem \eqref{rothe-eq}
admits a solution $(v_\tau^k)_{k=0,\ldots,N}$, and the piecewise affine
interpolants $v_\tau$ of $(v_\tau^k)_{k=0,\ldots,N}$ satisfy 
\[
  \norm{v_\tau}_{H^1(0,T; H)\cap L^\infty(0,T; V)}\leq c
\]
for a positive constant $c$ independent of $\tau$. Furthermore, there are a subsequence
$(\tau_i)_{i\in\enne}$, with $\tau_i\to 0$
and an element $v\in H^1(0,T; H)\cap L^\infty(0,T; V)$,
such that $v_{\tau_i}\wstarto v$ in $H^1(0,T; H)\cap L^\infty(0,T; V)$
and $v$ is a solution to the problem
\[
  \partial\Xi\left(\partial_t v\right) + A(v)\ni f\,, \qquad v(0)=v_0\,.
\]
\end{prop}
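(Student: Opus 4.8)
The statement is the classical convergence theorem for the Rothe (implicit Euler) time-discretization of a doubly nonlinear evolution inclusion, and I would prove it in three movements: solvability of the discrete problem, $\tau$-uniform a priori estimates, and passage to the limit by monotonicity and compactness. For the first movement, fix $k$ and regard $v_\tau^{k-1}$ as known; the $k$-th inclusion in \eqref{rothe-eq} asks for $v_\tau^k\in H$ with $\mathcal A_k(v_\tau^k)\ni f^k$, where $\mathcal A_k:=\partial\Xi_k+\partial\Phi+A_2$ and $\Xi_k(v):=\tau\,\Xi\big((v-v_\tau^{k-1})/\tau\big)$. Since $\Xi$ is finite, convex and l.s.c.\ on $H$ (hence continuous), $\partial\Xi_k$ is maximal monotone on $H$ with full domain; moreover the third condition classically forces $\Phi$ to be coercive on $V$, which makes the extension of $\Phi$ to $H$ (by $+\infty$ off $V$) proper, convex and l.s.c.\ on $H$, so that $\partial\Phi$ induces a maximal monotone operator in $H$. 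Thus $\mathcal A_k$ is a Lipschitz perturbation of a maximal monotone operator; combining the first and third assumptions with the Lipschitz bound $\norm{A_2(v)}_H\le\norm{A_2(0)}_H+L\norm{v}_H$ (with $L$ the Lipschitz constant of $A_2$) one checks that $\mathcal A_k$ is coercive on $H$ as soon as $\tau=T/N<c_0/(4L)$ --- this is precisely the ``$N$ sufficiently large'' restriction. The surjectivity result for coercive Lipschitz perturbations of maximal monotone operators (see~\cite{barbu-monot}) then yields $v_\tau^k$, and iterating over $k$ produces $(v_\tau^k)_{k=0,\dots,N}$; no uniqueness at the discrete level is needed.

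For the a priori estimates I would, for each $k$, pick selections $w_\Xi^k\in\partial\Xi\big((v_\tau^k-v_\tau^{k-1})/\tau\big)$ and $w_\Phi^k\in\partial\Phi(v_\tau^k)$ with $w_\Xi^k+w_\Phi^k+A_2(v_\tau^k)=f^k$, and test this identity by $v_\tau^k-v_\tau^{k-1}$. The first assumption gives $(w_\Xi^k,v_\tau^k-v_\tau^{k-1})_H\ge(c_0/\tau)\norm{v_\tau^k-v_\tau^{k-1}}_H^2$; convexity of $\Phi$ gives $(w_\Phi^k,v_\tau^k-v_\tau^{k-1})_H\ge\Phi(v_\tau^k)-\Phi(v_\tau^{k-1})$; splitting $A_2(v_\tau^k)=A_2(v_\tau^{k-1})+(A_2(v_\tau^k)-A_2(v_\tau^{k-1}))$ and using the Lipschitz bound together with Young's inequality, the $A_2$- and $f^k$-contributions are absorbed into a small multiple of $(c_0/\tau)\norm{v_\tau^k-v_\tau^{k-1}}_H^2$ up to a remainder $C\tau(1+\norm{v_\tau^{k-1}}_H^2+\norm{f^k}_H^2)$. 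Summing over $k=1,\dots,m$ and using $\sum_k\tau\norm{f^k}_H^2\le\norm{f}_{L^2(0,T;H)}^2$, $\Phi(v_0)<\infty$, $\Phi\ge0$, I obtain
\beq\label{rothe-sketch}
\frac{c_0}{2\tau}\sum_{k=1}^m\norm{v_\tau^k-v_\tau^{k-1}}_H^2+\Phi(v_\tau^m)\le C\Big(1+\sum_{k=1}^m\tau\,\norm{v_\tau^{k-1}}_H^2\Big),\qquad 1\le m\le N,
\eeq
with $C$ independent of $\tau$ and $m$. Coupling \eqref{rothe-sketch} with the elementary bound $\norm{v_\tau^m}_H^2\le2\norm{v_0}_H^2+2m\sum_{k=1}^m\norm{v_\tau^k-v_\tau^{k-1}}_H^2$ and with $m\tau\le T$ puts $(\norm{v_\tau^m}_H^2)_m$ in the form of a discrete Gronwall inequality, so $\sup_k\norm{v_\tau^k}_H\le C$; inserting this back into \eqref{rothe-sketch}, $\tau^{-1}\sum_k\norm{v_\tau^k-v_\tau^{k-1}}_H^2\le C$ and $\sup_k\Phi(v_\tau^k)\le C$, whence --- by the coercivity of $\Phi$ on $V$ --- $\sup_k\norm{v_\tau^k}_V\le C$. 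For the piecewise-affine interpolant $v_\tau$ and the piecewise-constant interpolants $\bar v_\tau,\underline v_\tau$ of $(v_\tau^k)$ these bounds read $\norm{v_\tau}_{H^1(0,T;H)\cap L^\infty(0,T;V)}+\norm{\bar v_\tau}_{L^\infty(0,T;V)}\le C$ and $\norm{\bar v_\tau-v_\tau}_{L^2(0,T;H)}\le C\sqrt\tau$, uniformly in $\tau$; this is the asserted bound on $v_\tau$.

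For the passage to the limit, the above bounds and the compact embedding $V\cembed H$ let an Aubin--Lions-type argument (cf.~\cite{lions}) produce $\tau_i\to0$ and $v\in H^1(0,T;H)\cap L^\infty(0,T;V)$ with $v_{\tau_i}\wstarto v$ in $L^\infty(0,T;V)$, $v_{\tau_i}\wto v$ in $H^1(0,T;H)$, $v_{\tau_i}\to v$ in $C([0,T];H)$, hence also $\bar v_{\tau_i},\underline v_{\tau_i}\to v$ in $L^2(0,T;H)$ and $v(0)=v_0$. The growth condition on $\partial\Xi$ bounds the piecewise-constant function $\bar w_\Xi^\tau$ with values $w_\Xi^k$ in $L^2(0,T;H)$, so $\bar w_\Xi^{\tau_i}\wto\eta$ there; since $\bar f_{\tau_i}\to f$ and $A_2(\bar v_{\tau_i})\to A_2(v)$ in $L^2(0,T;H)$, the function $\bar w_\Phi^\tau$ with values $w_\Phi^k$ satisfies $\bar w_\Phi^{\tau_i}\wto\chi:=f-\eta-A_2(v)$. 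Rewriting \eqref{rothe-eq} as $\bar w_\Xi^\tau+\bar w_\Phi^\tau+A_2(\bar v_\tau)=\bar f_\tau$ a.e., with $\bar w_\Xi^\tau(t)\in\partial\Xi(\partial_t v_\tau(t))$ and $\bar w_\Phi^\tau(t)\in\partial\Phi(\bar v_\tau(t))$, it remains to identify the two nonlinearities in the limit. Since $\bar v_{\tau_i}\to v$ strongly and $\bar w_\Phi^{\tau_i}\wto\chi$ weakly in $L^2(0,T;H)$, a standard Minty argument (using maximal monotonicity of $\partial\Phi$ in $H$) gives $\chi\in\partial\Phi(v)$ a.e.; for the $\partial\Xi$-term, where only the weak convergences $\partial_t v_{\tau_i}\wto\partial_t v$ and $\bar w_\Xi^{\tau_i}\wto\eta$ are at hand, I would verify $\limsup_i\int_0^T(\bar w_\Xi^{\tau_i},\partial_t v_{\tau_i})_H\le\int_0^T(\eta,\partial_t v)_H$ and conclude $\eta\in\partial\Xi(\partial_t v)$ a.e.\ by the weak--weak closedness of the graph of a maximal monotone operator. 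That inequality follows from the scheme: $\int_0^T(\bar w_\Xi^\tau,\partial_t v_\tau)_H=\int_0^T(\bar f_\tau-A_2(\bar v_\tau),\partial_t v_\tau)_H-\int_0^T(\bar w_\Phi^\tau,\partial_t v_\tau)_H$, where the first term converges (strong times weak) to $\int_0^T(f-A_2(v),\partial_t v)_H=\int_0^T(\eta+\chi,\partial_t v)_H$, while $\int_0^T(\bar w_\Phi^\tau,\partial_t v_\tau)_H=\sum_k(w_\Phi^k,v_\tau^k-v_\tau^{k-1})_H\ge\Phi(\bar v_\tau(T))-\Phi(v_0)$ by the discrete subdifferential inequality, with $\liminf_i\Phi(\bar v_{\tau_i}(T))\ge\Phi(v(T))$ by lower semicontinuity of $\Phi$ along $v_{\tau_i}(T)\to v(T)$; since the chain rule for convex functionals along the absolutely continuous curve $v$ gives $\int_0^T(\chi,\partial_t v)_H=\Phi(v(T))-\Phi(v_0)$, the terms collect to the claim. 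Then $\eta+\chi+A_2(v)=f$ a.e., $\eta\in\partial\Xi(\partial_t v)$, $\chi\in\partial\Phi(v)$ and $v(0)=v_0$, i.e.\ $v$ solves $\partial\Xi(\partial_t v)+A(v)\ni f$, $v(0)=v_0$.

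The delicate point is exactly this last identification: the $\partial\Xi$-term cannot be treated by a plain Minty trick because $\partial_t v_\tau$ and $\bar w_\Xi^\tau$ both converge only weakly, and one must manufacture the limsup inequality from the energy identity intrinsic to the Rothe scheme, combined with the weak lower semicontinuity of $\Phi$ and the convex chain rule. A secondary, recurrent subtlety is that coercivity is assumed for $\partial\Phi$ rather than for $\Phi$ itself: I would use repeatedly the (elementary but to-be-spelled-out) fact that this entails coercivity of $\Phi$ on $V$, which is what underpins both the $L^\infty(0,T;V)$ bound and the lower semicontinuity in $H$ of the extended functional.
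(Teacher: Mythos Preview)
The paper does not prove this proposition at all: it is stated with an explicit reference to \cite[Prop.~11.6]{roub} and then used as a black box in the proof of Theorem~\ref{thm-prelim}. Your proposal therefore goes well beyond what the authors do, supplying a complete (and essentially correct) sketch of the classical Rothe-method argument that one finds in Roub\'{\i}\v{c}ek's book --- solvability of each time step via surjectivity of a coercive Lipschitz perturbation of a maximal monotone operator, $\tau$-uniform energy estimates obtained by testing with the discrete increment and closing with a discrete Gronwall inequality, and passage to the limit by Aubin--Lions compactness combined with Minty-type identification of the two nonlinearities. The limsup argument you give for identifying $\eta\in\partial\Xi(\partial_t v)$ (exploiting the discrete energy identity, lower semicontinuity of $\Phi$ at the final time, and the convex chain rule) is indeed the standard and correct device here, and your flagging of the passage from coercivity of $\partial\Phi$ to coercivity of $\Phi$ as a point requiring care is appropriate, though in the specific application of the paper $\Phi$ is quadratic and the issue is trivial. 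In short: there is nothing to compare against, and your argument is sound.
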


Now, note that equation \eqref{eq2_approx} can be written as
\beq\label{eq:app-lam}
  (I + \beta_\lambda)(\partial_t u_\lambda) + 
  (-\Delta + \gamma_\lambda + \lambda I - KT_\lambda(I+\lambda\gamma)^{-1})(u_\lambda) 
  = \mu_\lambda-g_\lambda\,.
\eeq
Hence, for any $\lambda\in(0,1)$ fixed, we can apply
Proposition~\ref{roub} with the choices
\begin{align*}
  &\Xi(v):=\frac12\norm{v}_H^2 + \int_\Omega\widehat\beta_\lambda(v)\,, \quad v\in H\,,\qquad
  \Phi(v):=\frac12\int_\Omega\left(|\nabla v|^2 + \lambda|v^2|\right)\,,\quad v\in V\,,\\
  &A_1:=-\Delta + \lambda I\,, \quad A_2:=\gamma_\lambda - KT_\lambda(I+\lambda\gamma)^{-1}\,, 
  \quad f:=\mu_\lambda-g_\lambda\,, \quad v_0:=u_{0,\eps\delta}\,.
\end{align*}
Let then $(u_{\lambda,\tau}^k)_{k=0,\ldots,N}$ be a Rothe-sequence for the approximated
problem with parameter $\lambda$. Then, 
since the solution $u_\lambda$ to \eqref{eq1_approx}--\eqref{eq3_approx}
is uniquely determined, 
setting $u_{\lambda,\tau}$ as
the piecewise affine interpolant of $(u_{\lambda,\tau}^k)_{k=0,\ldots,N}$, 
it turns out that
\beq
  \label{rothe-conv}
  u_{\lambda,\tau} \wstarto u_\lambda \quad\text{in } H^1(0,T; H)\cap L^\infty(0,T; V)
\eeq
for the whole sequence $(u_{\lambda,\tau})_\tau$.

Thanks to the estimate on $(\mu_\lambda)_\lambda$ and the boundedness of $(g_\lambda)_\lambda$, 
there exists a positive constant $M$, independent of $\lambda$, such that
\beq\label{est_inf}
 \|\mu_\lambda-g_\lambda\|_{L^\infty(Q)}\leq M.
\eeq
By the growth assumption on $\psi'$, there are $\bar a, \bar b\in \erre$
with $r_0\in(\bar a,\bar b)$, $[a_0,b_0]\subseteq[\bar a, \bar b]\subset(a,b)$, and
\begin{gather}
  {\psi'(r)\ge M+1}\quad\textrm{for all }r\in[\bar b,b)\label{eq:33b}\,,\\
  {\psi'(r)\le -M-1}\quad\textrm{for all }r\in (a, \bar a]\label{eq:33c}\,.
\end{gather}
Setting now $a_0':=\bar a - \frac{\bar a-a}{2}$ and $b_0':= \bar b + \frac{b-\bar b}{2}$,
we have $[a_0,b_0]\subseteq[\bar a, \bar b]\subset[a_0', b_0']\subset(a,b)$.
By the properties of the resolvent $(I+\lambda\gamma)^{-1}:\erre\to\erre$, it is well known that
\begin{equation}\label{eq:limit1}
\lim_{\lambda\searrow0}
(I+\lambda\gamma)^{-1}(a_0')=a_0'\,, \qquad
\lim_{\lambda\searrow0}
(I+\lambda\gamma)^{-1}(b_0')= b_0'\,.
\end{equation}
Note also that, since $\gamma(r_0)=0$, \pier{it holds} $(I+\lambda\gamma)^{-1}(r_0)=r_0$,
hence, recalling that $(I+\lambda\gamma)^{-1}$ is $1$-Lipschitz-continuous,
\begin{align*}
  |(I+\lambda\gamma)^{-1}(a_0')-r_0|=|(I+\lambda\gamma)^{-1}(a_0')-(I+\lambda\gamma)^{-1}(r_0)|
  \leq|a_0'-r_0|\,,\\
  |(I+\lambda\gamma)^{-1}(b_0')-r_0|=|(I+\lambda\gamma)^{-1}(b_0')-(I+\lambda\gamma)^{-1}(r_0)|
  \leq|b_0'-r_0|\,.
\end{align*}
Since $r_0\in(a_0',b_0')$, we deduce from the last \pier{inequalities} that 
$(I+\lambda\gamma)^{-1}(a_0')\ge a_0'$ and 
$(I+\lambda\gamma)^{-1}(b_0')\le b_0'$. Then, by making use of \eqref{eq:limit1}, we conclude
that 
there exists $\lambda_0\in(0,1)$ such that, for every $\lambda\in(0,\lambda_0)$,
\[
  a_0' \leq (I+\lambda\gamma)^{-1}(a_0') \leq \bar a\,, \qquad
  \bar b \leq (I+\lambda\gamma)^{-1}(b_0') \leq b_0'\,.
\]
Moreover, since the resolvent $(I+\lambda\gamma)^{-1}$
is non-decreasing, for every $\lambda\in(0,\lambda_0)$ we have
\beq\label{aux_res}
(I+\lambda\gamma)^{-1}(r)\leq \bar a \quad\forall\, r\in(a, a_0']\,, \qquad
(I+\lambda\gamma)^{-1}(r)\geq \bar b \quad\forall\, r\in[b_0',b)\,.
\eeq
We claim now that if the initial datum $u_{0}$ satisfies
\[
a_0\leq u_{0} \leq b_0 \quad\text{a.e~in } \Omega\,,
\]
then
\beq\label{boundedness}
a_0'\leq u_\lambda \leq b_0' \quad\text{a.e~in } Q\,.
\eeq
Thanks to the convergence \eqref{rothe-conv}, it is enough to check that
\[
a_0'\leq u_{\lambda,\tau}^k \leq b_0' \quad\text{a.e.~in } \Omega\,,  \quad\text{for } k=0,\ldots,N\,.
\]
By contradiction, 
let $k$ be the smallest index such that 
$u_{\lambda,\tau}^k>b_0'$ on a set of positive measure in $\Omega$. 
Then, testing 
the analogue of
\eqref{rothe-eq} by $(u_{\lambda,\tau}^k- b_0')^+$ we have
\begin{align*}
  &\int_\Omega (I+\beta_\lambda)\left(\frac{u_{\lambda,\tau}^k-u_{\lambda,\tau}^{k-1}}{\tau}\right)(u_{\lambda,\tau}^k-b_0')^+
  +\int_\Omega |\nabla (u_{\lambda,\tau}^k-b_0')^+|^2\\
  &\qquad=\int_\Omega (\mu_\lambda-g_\lambda -\lambda u_{\lambda,\tau}^k
  -\gamma_\lambda(u_{\lambda,\tau}^k)
   + KT_\lambda(I+\lambda\gamma)^{-1}(u_{\lambda,\tau}^k))(u_{\lambda,\tau}^k- b_0')^+\\
   &\qquad=\int_{\{u_{\lambda,\tau}^k>b_0'\}} (\mu_\lambda-g_\lambda -\lambda u_{\lambda,\tau}^k
  -\gamma_\lambda(u_{\lambda,\tau}^k)
   + KT_\lambda(I+\lambda\gamma)^{-1}(u_{\lambda,\tau}^k))(u_{\lambda,\tau}^k- b_0')\,.
\end{align*}
Let us show that the right-hand side of the above equation is non positive if
\[
  \lambda <\min\left\{\frac1{|\bar b|}, \frac1{|b_0'|}\right\}\,,
\]
which is clearly not restrictive. Indeed, on the set $\{u_{\lambda,\tau}^k>b_0'\}$,
owing to \eqref{aux_res} we have \pier{that} $(I+\lambda\gamma)^{-1}(u_{\lambda,\tau}^k)\geq \bar b$ \pier{and consequently, as $\frac1\lambda>|\bar b|$, also that}
\[
T_\lambda(I+\lambda\gamma)^{-1}(u_{\lambda,\tau}^k) \leq (I+\lambda\gamma)^{-1}(u_{\lambda,\tau}^k)
\quad\text{a.e.~in } \{u_{\lambda,\tau}^k>b_0'\}\,.
\]
\luca{Recalling the definition of the Yosida approximation
$$\pier{\gamma_\lambda=\frac{I-(I+\lambda\gamma)^{-1}}{\lambda},}$$ 
we \pier{observe} that $\gamma_\lambda(r)=\gamma((I+\lambda\gamma)^{-1}(r))$
for every $r\in\erre$.}
Therefore, by \eqref{eq:33b} we infer that, on the set $\{u_{\lambda,\tau}^k>b_0'\}$,
\begin{align*}
  &\lambda u_{\lambda,\tau}^k+\gamma_\lambda(u_{\lambda,\tau}^k) - 
  KT_\lambda(I+\lambda\gamma)^{-1}(u_{\lambda,\tau}^k)\\
  &= \lambda u_{\lambda,\tau}^k + \gamma((I+\lambda\gamma)^{-1}(u_{\lambda,\tau}^k)) - 
  K(I+\lambda\gamma)^{-1}(u_{\lambda,\tau}^k)\\
  &\qquad+K(I+\lambda\gamma)^{-1}(u_{\lambda,\tau}^k)
  -KT_\lambda(I+\lambda\gamma)^{-1}(u_{\lambda,\tau}^k)\\
  &\geq \lambda u_{\lambda,\tau}^k + \psi'((I+\lambda\gamma)^{-1}(u_{\lambda,\tau}^k))\\
  &\geq \lambda b_0' + M +1 \geq M
\end{align*}
where we have used that $\lambda<\frac1{|b_0'|}$.

Hence, recalling \eqref{est_inf} we deduce that 
\[
  \int_{\{u_{\lambda,\tau}^k>b_0'\}} (\mu_\lambda-g_\lambda -\lambda u_{\lambda,\tau}^k
  -\gamma_\lambda(u_{\lambda,\tau}^k)
   + KT_\lambda(u_{\lambda,\tau}^k))(u_{\lambda,\tau}^k- b_0')\leq0\,.
\]
This implies that
\begin{equation}\label{ineq:max}
\int_{\{u_{\lambda,\tau}^k>b_0'\} }
(I+\beta_\lambda)\left(\frac{u_{\lambda,\tau}^k-u_{\lambda,\tau}^{k-1}}{\tau}\right)
(u_{\lambda,\tau}^k-b_0')\leq0\,.
\end{equation}
Now, on $\{u_{\lambda,\tau}^k>b_0'\}$ 
we must have $u_{\lambda,\tau}^k> b_0'\ge u_{\lambda,\tau}^{k-1}$
because of the definition of $k$. Thus, 
in view of the monotonicity of $\beta_\lambda$ and the fact that $\beta_\lambda(0)=0$,
the integrand in \eqref{ineq:max} is positive.
Since $\{u_{\lambda,\tau}^k>b_0'\}$ has positive measure by assumption this leads to a contradiction. 

The above argument implies that the Rothe approximation $u_{\lambda,\tau}^k$ satisfies the bound 
\[
\pier{u_{\lambda,\tau}}\leq b_0'\qquad \text{a.e. in } Q\,.
\]
A similar procedure can be used to prove that $\pier{u_{\lambda,\tau}} \geq a_0'$ 
a.e.~in $Q$ (for brevity we omit the details), hence \eqref{boundedness} follows.
Consequently, noting also that 
\begin{equation}
  a_0'\leq (I+\lambda\gamma)^{-1}(u_\lambda)\leq b_0' \quad\text{a.e.~in } Q
\label{pier5}
\end{equation} 
and $\gamma_\lambda(u_\lambda)=\gamma((I+\lambda\gamma)^{-1}(u_\lambda))$,
since $\gamma\in L^\infty(a_0',b_0')$ by \eqref{psi1} and \eqref{pier1}, we infer that
\beq
  \label{est-lam2}
  \norm{\gamma_\lambda(u_\lambda)}_{L^\infty(Q)}\leq c\,.
\eeq

Taking now the duality pairing between \eqref{eq2_approx} and $-\Delta\partial_t u_\lambda$,
integrating by parts we have
\begin{align*}
  &\int_{Q_t}|\nabla\partial_t u_\lambda|^2 
  + \int_{Q_t}\beta_\lambda'(\partial_t u_\lambda)|\nabla\partial_t u_\lambda|^2+
  \frac12\int_\Omega|\Delta u_\lambda(t)|^2
  +\frac\lambda2\int_\Omega|\nabla u_\lambda(t)|^2\\
  &= \frac12\int_\Omega\left(|\Delta u_0|^2 + \lambda|\nabla u_0|^2\right)
  +\int_{Q_t}\nabla\mu_\lambda\cdot\nabla\partial_t u_\lambda\\
  &-\int_{Q_t}\partial_t\left(g_\lambda+\gamma_\lambda(u_\lambda) 
  - KT_\lambda(I+\lambda\gamma)^{-1}(u_\lambda)\right)\Delta u_\lambda\\
  &+ \int_\Omega\left(g_\lambda+\gamma_\lambda(u_\lambda) 
  - KT_\lambda(I+\lambda\gamma)^{-1}(u_\lambda)\right)(t)\Delta u_\lambda(t)\\
  &-\int_\Omega\left(g_\lambda(0)+\gamma_\lambda(u_0) 
  - KT_\lambda(I+\lambda\gamma)^{-1}(u_0)\right)\Delta u_0\,.
\end{align*}
The first two terms on the right-hand side can be treated
by the assumptions on $u_0$ and the Young inequality.
About the third term, note that, 
since $\gamma'\in C^0([a_0',b_0'])$ by \eqref{psi1}, from \pier{\eqref{pier5}} 
it follows that
\[
|\partial_t \gamma_\lambda(u_\lambda)|=|\gamma_\lambda'(u_\lambda)\partial_t u_\lambda|
\leq \gamma'((I+\lambda\gamma)^{-1}(u_\lambda))|\partial_t u_\lambda|\leq c|\partial_t u_\lambda|\,.
\]
Hence, using the estimates \eqref{est-lam} and \eqref{est_inf}, as well as
the properties of $(g_\lambda)_\lambda$, again by the Young inequality we infer that
\begin{align*}
  \frac12\int_{Q_t}|\nabla\partial_t u_\lambda|^2 
  &+ \int_{Q_t}\beta_\lambda'(\partial_t u_\lambda)|\nabla\partial_t u_\lambda|^2+
  \frac14\int_\Omega|\Delta u_\lambda(t)|^2
  +\frac\lambda2\int_\Omega|\nabla u_\lambda(t)|^2\\
  &\leq
  c\left(1 + \int_{Q_t}|\Delta u_\lambda|^2\right)\,.
\end{align*}
The Gronwall lemma yields then
\beq
  \label{est-lam3}
  \norm{\Delta u_\lambda}_{L^\infty(0,T; H)}\leq c\,,
\eeq
\pier{whence, by comparison in \eqref{eq2_approx}, we also have}
\beq
  \label{est-lam4}
  \norm{\beta_\lambda(\partial_t u_\lambda)}_{L^\infty(0,T; H)}\leq c\,.
\eeq
Proceeding now as in \cite[\S~6]{bcst1}, we can conclude.

\subsection{The continuous dependence result}
We focus here on the proof of Theorem~\ref{contdep}.
Let $(u_i, \mu_i, \xi_i)$ satisfy \eqref{u}--\eqref{3}
with respect to the data $(u_{0,i}, g_i)$, for $i=1,2$: then, setting
  $u:=u_1-u_2$, $\mu:=\mu_1-\mu_2$, $\xi:=\xi_1-\xi_2$
  $u_0:=u_{0,1}-u_{0,2}$, and $g:=g_1-g_2$, we have
  \begin{align*}
  \partial_t u - \Delta\mu=0 \qquad&\text{in } Q\,,\\
  \mu=\eps\partial_t u + \xi - \delta\Delta u + \psi'(u_1) - \psi'(u_2) + g\qquad&\text{in } Q\,,\\
  u(0)=u_0 \qquad&\text{in } \Omega\,.
  \end{align*}
  Testing the first equation by $\mu$, the second by $\partial_t u $ and taking the difference we deduce,
  by monotonicity of $\beta$, for all $t\in[0,T]$,
  \begin{align*}
  &\int_{Q_t}|\nabla\mu|^2 + \eps\int_{Q_t}|\partial_t u|^2 + \int_{Q_t}\xi\partial_t u +
   \frac\delta2\int_\Omega|\nabla u(t)|^2\\
  &\leq \frac\delta2\norm{\nabla u_0}_H^2
  -\int_{Q_t}(\psi'(u_1)-\psi'(u_2))\partial_t u
  -\int_{Q_t}g\partial_t u\,.
  \end{align*}
  Now, the fact that $u_1, u_2 \in [a_0', b_0']\subset(a,b)$ for some $a_0', b_0'$ yields
  \[
  |\psi'(u_1)-\psi'(u_2)|\leq\norm{\psi''}_{C^0([a_0', b_0'])}|u|\,.
  \]
  Hence, using the Young inequality and the fact that 
  \[
  u(t)=u_0 + \int_0^t\partial_t u(s)\,ds\,,
  \]
  we are left with
  \begin{align*}
  &\int_{Q_t}|\nabla\mu|^2 + \frac\eps2\int_{Q_t}|\partial_t u|^2 
  + \int_{Q_t}\xi\partial_t u  +
  \frac\delta2\int_\Omega|\nabla u(t)|^2\\
  &\leq \frac\delta2\norm{\nabla u_0}_H^2+
  \frac1\eps\norm{\psi''}_{C^0([a_0', b_0'])}^2\int_{Q_t}|u|^2 +
  \frac1\eps\norm{g}_{L^2(0,T; H)}^2\\
  &\leq\frac\delta2\norm{\nabla u_0}_H^2 +
  \frac{2T}\eps\norm{u_0}_H^2 +
  \frac{2T}\eps \norm{\psi''}_{C^0([a_0', b_0'])}^2\int_0^t\int_{Q_s}|\partial_t u|^2\,ds
  +\frac1\eps\norm{g}_{L^2(0,T; H)}^2\,.
  \end{align*}
  The Gronwall lemma \pier{yields} then the desired continuous dependence estimate \eqref{dipcont}.


\section{Proof of Theorem~\ref{thm1}}\label{sec:limit-as-epssearrow0}
\setcounter{equation}{0}

This section is devoted to the proof of Theorem~\ref{thm1}.
Since $\delta>0$ is fixed and we let $\eps\searrow0$,
in order to avoid heavy notations we will not write
explicitly the dependence on $\delta$
for the quantities in play. In particular, let $(u_\eps, \mu_\eps, \xi_\eps)$
be any solution satisfying \eqref{u}--\eqref{3} for every $\eps>0$.

\subsection{First estimate}\label{ssec:est1_ep}
We test \eqref{1} by $\mu_\eps$, \eqref{2} by $\partial_t u_\eps$ and subtract, obtaining
\[\begin{split}
  \int_{Q_t}|\nabla\mu_\eps|^2 &+ \eps\int_{Q_t}|\partial_t u_\eps|^2 + \int_{Q_t}\xi_\eps\partial_t u_\eps
  +\frac\delta2\int_\Omega|\nabla u_\eps(t)|^2 + \int_\Omega\psi(u_\eps(t))\\
  &=\frac\delta2\int_\Omega|\nabla u_0|^2 + \int_\Omega\psi(u_0) -
   \int_{Q_t}g_\eps\partial_t u_\eps \qquad\forall\,t\in[0,T]\,.
\end{split}\]
Now, by \eqref{pier1}--\eqref{pier2} we have
\[
\psi(u_0)=\widehat\gamma(u_0) -\frac{K}{2}|u_0|^2 +\psi(r_0) +\frac{K}2|r_0|^2\,,
\]
where, recalling that $\psi'(u_0)\in H$ by \eqref{ip_eps1}, hence also 
$\gamma(u_0)\in H$,
\[
\widehat\gamma(u_0)\leq
\widehat\gamma(u_0) + \widehat{\gamma^{-1}}(\gamma(u_0))
=\gamma(u_0)u_0 \in L^1(\Omega)\,.
\]
Therefore, we see that
$\psi(u_0)\in L^1(\Omega)$.
By the monotonicity of $\beta$ and 
conditions \eqref{ip_eps1}, \eqref{ip_eps2} and \eqref{conv_g_eps},
integrating by parts in time the last term we infer that 
there exists $c>0$, independent of $\eps$, such that
\[
  \begin{split}
  &\int_{Q_t}|\nabla\mu_\eps|^2 + \eps\int_{Q_t}|\partial_t u_\eps|^2
  +\frac\delta2\int_\Omega|\nabla u_\eps(t)|^2 + C_1\int_\Omega|u_\eps(t)|^2\\
  &\leq c
  +\int_{Q_t}\partial_t g_\eps u_\eps - \int_\Omega g_\eps(t)u_\eps(t) + \int_\Omega g(0)u_0\\
  &\leq c
  +\int_{Q_t}|u_\eps|^2 + \frac14\int_Q|\partial_t g_\eps|^2
  +\frac{C_1}{2}\int_\Omega|u_\eps(t)|^2
  +\frac{1}{2C_1}\norm{g_\eps(t)}_{H}^2
  +\norm{g(0)}_{H}\norm{u_0}_H\,.
\end{split}
\]
Rearranging the terms \pier{and} recalling that $(g_\eps)_\eps$ is 
bounded in $H^1(0,T; H)$ independently of $\eps$ by \eqref{conv_g_eps},
\pier{an application of} the Gronwall lemma \pier{leads to}
\beq
  \label{est1_eps}
  \norm{\mu_\eps}_{L^2(0,T; V_0)} 
  + \norm{u_\eps}_{L^\infty(0,T; V)} 
  + \eps^{1/2}\norm{\partial_t u_\eps}_{L^2(0,T; H)}\leq c
\eeq
and by comparison in \eqref{1} \pier{we also deduce that}
\beq
  \label{est2_eps}
  \norm{\partial_t u_\eps}_{L^2(0,T; V_0^*)}\leq c\,.
\eeq

\subsection{Second estimate}\label{sec:second-estimate}
In order to derive this estimate 
first we need to identify the initial values of the solutions
$\mu_{0\eps}:=\mu_\eps(0)$ and $u_{0\eps}':=\partial_t u_\eps(0)$.

\begin{lem}
  For every $\eps>0$, there exists a unique triplet $(\mu_{0\eps}, u_{0\eps}', \xi_{0\eps})\in 
  W_0\times H\times H$ such that 
  \[
    u_{0\eps}' - \Delta\mu_{0\eps}=0\,, \quad
    \mu_{0\eps} = \eps u_{0\eps}' + \xi_{0\eps}
    -\delta\Delta u_0 + \psi'(u_0) + g(0)\,, \quad
    \xi_{0\eps} \in \beta(u_{0\eps}')
  \]
  \pier{almost everywhere in $\Omega$.}
  Moreover, there exists a positive constant $c$, independent of $\eps$, such that 
  \[
  \int_\Omega|\nabla \mu_{0\eps}|^2 + \eps\int_\Omega|u_{0\eps}'|^2 + 
  \int_\Omega\widehat{\beta^{-1}}(\xi_{0\eps}) \leq c \qquad\forall\,\eps>0\,.
  \]
\end{lem}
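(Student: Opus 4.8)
The plan is to realize the triplet $(\mu_{0\eps}, u_{0\eps}', \xi_{0\eps})$ as the solution of a stationary elliptic system obtained by formally evaluating \eqref{1}--\eqref{2} at $t=0$, and to produce it via minimization of a suitable convex functional. Recalling the notation $z_0 := -\delta\Delta u_0 + \psi'(u_0) + g(0)$ from \eqref{ip_eps3}, the system to solve reads $u_{0\eps}' - \Delta\mu_{0\eps}=0$, $\mu_{0\eps} = \eps u_{0\eps}' + \xi_{0\eps} + z_0$, $\xi_{0\eps}\in\beta(u_{0\eps}')$, with $\mu_{0\eps}\in W_0$. Note that $z_0\in H$ thanks to the regularity $u_0\in W_{\bf n}$, $\psi'(u_0)\in H$, $g(0)\in L^\infty(\Omega)$ assumed in \eqref{ip_eps1}. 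The idea is to eliminate $u_{0\eps}'$ using the first equation: since $u_{0\eps}' = \Delta\mu_{0\eps}$ has zero mean on $\Omega$ (formally), one is led to solve for $\mu_{0\eps}$ in the space $V_0$, where the Laplacian with homogeneous Dirichlet conditions is invertible; equivalently, introduce $w := \mu_{0\eps}$ and write $u_{0\eps}' = \Delta w$, so the inclusion becomes $w - z_0 - \eps\Delta w \in \beta(\Delta w)$.

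First I would set up the variational problem. Define on $V_0$ the functional
\[
J(w) := \frac{\eps}{2}\int_\Omega|\nabla w|^2 + \int_\Omega\widehat{\beta^{-1}}(\nabla\!\cdot\!(\ldots))\,,
\]
— more precisely, it is cleaner to work with the \emph{dual} formulation: set $v := u_{0\eps}'$ as the primary unknown in $H$, let $\mathcal{N}:V_0^*\to V_0$ be the solution operator of $-\Delta$ with homogeneous Dirichlet conditions (so $\mu_{0\eps} = \mathcal{N}(v)$ once we know $v$ has the right regularity), and minimize over $v\in H$
\[
\mathcal{J}(v) := \frac12\int_\Omega|\nabla\mathcal{N}(v)|^2 + \frac{\eps}{2}\int_\Omega|v|^2 + \int_\Omega\widehat\beta(v) + \int_\Omega z_0\, v\,.
\]
This functional is proper (it is finite at $v=0$ since $\widehat\beta(0)=0$, $\mathcal{N}(0)=0$), strictly convex (the $\eps$-term gives strict convexity), lower semicontinuous on $H$, and coercive because the $\frac{\eps}{2}\|v\|_H^2$ term dominates the linear term $\int_\Omega z_0 v$ (with $z_0\in H$) and $\widehat\beta\ge 0$, $\|\nabla\mathcal{N}(v)\|_H^2\ge0$. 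Hence $\mathcal{J}$ admits a unique minimizer $u_{0\eps}'\in H$. Writing the Euler--Lagrange inclusion $0\in -\Delta\mathcal{N}(u_{0\eps}') + \eps u_{0\eps}' + \beta(u_{0\eps}') + z_0$ and setting $\mu_{0\eps}:=\mathcal{N}(u_{0\eps}')$, $\xi_{0\eps} := \mu_{0\eps} - \eps u_{0\eps}' - z_0$, one recovers exactly the three relations; elliptic regularity for $-\Delta\mu_{0\eps} = u_{0\eps}'\in H$ with $\mu_{0\eps}\in V_0$ gives $\mu_{0\eps}\in W_0$, and $\xi_{0\eps}\in H$ follows by comparison, while $\xi_{0\eps}\in\beta(u_{0\eps}')$ a.e.\ is the content of the subdifferential inclusion. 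Uniqueness is immediate from strict convexity (or directly by testing the difference of two solutions and using monotonicity of $\beta$ together with $\eps>0$).

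For the uniform bound, I would test the inclusion $\mu_{0\eps} = \eps u_{0\eps}' + \xi_{0\eps} + z_0$ by $u_{0\eps}' = \Delta\mu_{0\eps}$ — equivalently multiply and integrate by parts — to get
\[
-\int_\Omega|\nabla\mu_{0\eps}|^2 = \eps\int_\Omega|u_{0\eps}'|^2 + \int_\Omega\xi_{0\eps}\,u_{0\eps}' + \int_\Omega z_0\,u_{0\eps}'\,.
\]
This alone controls the wrong-sign combination, so instead I test by $\xi_{0\eps}$ (or combine the two tests): testing the first equation $u_{0\eps}'=\Delta\mu_{0\eps}$ by $\mu_{0\eps}$ gives $\int_\Omega\nabla\mu_{0\eps}\cdot\nabla\mu_{0\eps} = -\int_\Omega u_{0\eps}'\mu_{0\eps}$ is not quite it either; the clean route is to test the second relation by $-u_{0\eps}'$ and the first by $\mu_{0\eps}$ and add, yielding
\[
\int_\Omega|\nabla\mu_{0\eps}|^2 + \eps\int_\Omega|u_{0\eps}'|^2 + \int_\Omega\xi_{0\eps}\,u_{0\eps}' = -\int_\Omega z_0\,u_{0\eps}'\,.
\]
Now invoke the Young inequality $\xi_{0\eps}\,u_{0\eps}' \ge \widehat{\beta^{-1}}(\xi_{0\eps})$ (valid since $\xi_{0\eps}\in\beta(u_{0\eps}')$, equality case), and on the right-hand side write $-\int_\Omega z_0 u_{0\eps}' = \int_\Omega(-z_0)u_{0\eps}' \le \int_\Omega\widehat{\beta^{-1}}(-z_0) + \int_\Omega\widehat\beta(u_{0\eps}')$ — but this reintroduces $\widehat\beta(u_{0\eps}')$, which we do not control. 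The resolution, and I expect this to be \textbf{the main obstacle}, is to bound the right-hand side differently: use instead $|{-z_0}\,u_{0\eps}'|\le \frac{\eps}{2}|u_{0\eps}'|^2 + \frac1{2\eps}|z_0|^2$ when a bound depending on $\eps$ suffices — but the lemma demands an $\eps$-independent constant. The correct device (as in \cite{BCT}) is assumption \eqref{ip_eps3}: $\widehat{\beta^{-1}}(-z_0)\in L^1(\Omega)$. Test the inclusion $\xi_{0\eps} = \mu_{0\eps} - \eps u_{0\eps}' - z_0$ and the identity $u_{0\eps}' = \Delta\mu_{0\eps}$ together so as to obtain $\int_\Omega|\nabla\mu_{0\eps}|^2 + \eps\int_\Omega|u_{0\eps}'|^2 + \int_\Omega\widehat{\beta^{-1}}(\xi_{0\eps}) \le \int_\Omega\xi_{0\eps}u_{0\eps}' + \int_\Omega|\nabla\mu_{0\eps}|^2 + \eps\int_\Omega|u_{0\eps}'|^2$ and then, crucially, re-examine the sign: since $(-z_0)u_{0\eps}' - \widehat\beta(u_{0\eps}') \le \widehat{\beta^{-1}}(-z_0)$ and $\widehat\beta(u_{0\eps}')\le \xi_{0\eps}u_{0\eps}' - \widehat{\beta^{-1}}(\xi_{0\eps})$ combine to absorb $\int_\Omega\widehat\beta(u_{0\eps}')$ against $\int_\Omega\xi_{0\eps}u_{0\eps}'$, leaving precisely
\[
\int_\Omega|\nabla\mu_{0\eps}|^2 + \eps\int_\Omega|u_{0\eps}'|^2 + \int_\Omega\widehat{\beta^{-1}}(\xi_{0\eps}) \le \int_\Omega\widehat{\beta^{-1}}(-z_0) =: c\,,
\]
which is the asserted bound with $c$ depending only on the data through \eqref{ip_eps3}. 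I would close by double-checking that all integrations by parts are legitimate given $\mu_{0\eps}\in W_0$ and $u_{0\eps}'\in H$, and that the Young-inequality equality/inequality cases are applied in the right direction; these are routine once the structure above is in place.
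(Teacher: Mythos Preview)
Your proof is correct and, for the uniform estimate, coincides with the paper's argument: test the first relation by $\mu_{0\eps}$ and the second by $u_{0\eps}'$, use the equality case $\xi_{0\eps}u_{0\eps}'=\widehat\beta(u_{0\eps}')+\widehat{\beta^{-1}}(\xi_{0\eps})$ on the left and the Young inequality $-z_0u_{0\eps}'\le\widehat{\beta^{-1}}(-z_0)+\widehat\beta(u_{0\eps}')$ on the right, so that the $\widehat\beta(u_{0\eps}')$ terms cancel and $c=\int_\Omega\widehat{\beta^{-1}}(-z_0)$ via \eqref{ip_eps3}. The meandering in your write-up can be cut: the ``main obstacle'' you flag is not an obstacle once you recognize the cancellation.

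For existence and uniqueness the paper simply invokes the maximal monotonicity of $\beta$ and refers to \cite[p.~1006]{bcst1}, whereas you construct $u_{0\eps}'$ as the minimizer of a strictly convex coercive functional on $H$; the two routes are equivalent and equally short. One slip to fix: the subdifferential of $v\mapsto\frac12\norm{\nabla\mathcal N(v)}_H^2$ is $\mathcal N(v)$, not $-\Delta\mathcal N(v)$ (the latter is just $v$). With the correct Euler--Lagrange inclusion $0\in\mathcal N(u_{0\eps}')+\eps u_{0\eps}'+\beta(u_{0\eps}')+z_0$ you must set $\mu_{0\eps}:=-\mathcal N(u_{0\eps}')$ (note the sign) to recover $u_{0\eps}'-\Delta\mu_{0\eps}=0$ and $\mu_{0\eps}=\eps u_{0\eps}'+\xi_{0\eps}+z_0$; after that the elliptic regularity and comparison steps go through as you indicate. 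Also drop the parenthetical about $\Delta\mu_{0\eps}$ having zero mean: that is a Neumann fact, irrelevant here since $\mu_{0\eps}\in V_0$.
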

\begin{proof}
  Since $z_0:=-\delta\Delta u_0 + \psi'(u_0) + g(0)\in H$, existence and uniqueness of
  $(\mu_{0\eps}, u_{0\eps}', \xi_{0\eps})$ follows from the maximal monotonicity of $\beta$,
  arguing as in \cite[p.~1006]{bcst1}. 
  Moreover, testing the first equation by $\mu_{0\eps}$, the second by $u_{0\eps}'$
  and taking the difference we have
  \[
  \int_\Omega|\nabla\mu_{0\eps}|^2 + \eps\int_\Omega|u_{0\eps}'|^2
  +\int_\Omega\xi_{0\eps} u_{0\eps}' + \int_\Omega z_0 u_{0\eps}'=0\,.
  \]
  \luca{Since $\xi_{0\eps}\in\beta(u_{0\eps}')$,
  on the left-hand side we have that 
  $\xi_{0\eps}u_{0\eps}'= 
  \widehat\beta(u_{0\eps}') + \widehat{\beta^{-1}}(\xi_{0\eps})$.
  Moreover, by the Young inequality we have
  \begin{gather*}
  \int_\Omega|\nabla\mu_{0\eps}|^2 + \eps\int_\Omega|u_{0\eps}'|^2
  +\int_\Omega\widehat\beta(u_{0\eps}') + \int_\Omega\widehat{\beta^{-1}}(\xi_{0\eps})
  \\=
  -\int_\Omega z_0 u_{0\eps}'\leq \int_\Omega\widehat{\beta^{-1}}(-z_0) + \int_\Omega\widehat\beta(u_{0\eps}')\,,
   \end{gather*}
  from which the estimate follows thanks to hypothesis \eqref{ip_eps3}.}
\end{proof}

Now, we proceed formally, testing \eqref{1} by $\partial_t \mu_\eps$, 
the time-derivative of \eqref{2} by $\partial_t u_\eps$
and subtracting: a rigorous computation can be obtained through a discretization in time
(for further details, see for example \cite[\S~5.2]{bcst1}). 
We obtain then, recalling the previous lemma and that $\psi''\geq-K$ by \eqref{psi5},
\[\begin{split}
  &\frac12\int_\Omega|\nabla\mu_\eps(t)|^2 + \frac\eps2\int_\Omega|\partial_t u_\eps(t)|^2
  +\int_\Omega\widehat{\beta^{-1}}(\xi_\eps(t)) + \delta\int_{Q_t}|\nabla\partial_t u_\eps|^2\\
  &=\frac12\int_\Omega|\nabla\mu_{0\eps}|^2 + \frac\eps2\int_\Omega|u_{0\eps}'|^2 
  + \int_\Omega\widehat{\beta^{-1}}(\xi_{0\eps})
  -\int_{Q_t}\left(\partial_t g_\eps + \psi''(u_\eps)\partial_t u_\eps\right)\partial_t u_\eps\\
  &\leq c + \frac14\norm{\partial_t g_\eps}^2_{L^2(0,T; H)} 
  + \left(1+K\right)\int_{Q_t}|\partial_t u_\eps|^2\,.
\end{split}\]
By the compactness inequality \eqref{comp}, we can \pier{handle the last} term on the right-hand side~as
\[
  (1+K)\int_{Q_t}|\partial_t u_\eps|^2 \leq \frac\delta2\int_{Q_t}|\nabla\partial_t u_\eps|^2 + c\norm{\partial_t u_\eps}^2_{L^2(0,T; V_0^*)}\,,
\]
so that by \eqref{est2_eps} and again \eqref{comp} we infer (possibly renominating $c$) that 
\beq
  \label{est3_eps}
  \norm{\mu_\eps}_{L^\infty(0,T; V_0)} + \norm{\partial_t u_\eps}_{L^2(0,T; V)} + \eps^{1/2}\norm{\partial_t u_\eps}_{L^\infty(0,T; H)} \leq c
\eeq
and, by comparison in \eqref{1}, also
\beq
  \label{est4_eps}
  \norm{\partial_t u_\eps}_{L^\infty(0,T; V_0^*)} + \norm{\mu_\eps}_{L^2(0,T; H^3(\Omega))}\leq c\,.
\eeq

\subsection{Third estimate under assumption \eqref{ip_psi}}\label{sec:third-estimate-eps}
We test \eqref{2} by $-\delta\Delta\partial_t u_\eps + \partial_t\gamma(u_\eps)$:
to this end, note that since $\partial_t u_\eps\in L^2(0,T; V)$ only, then 
$-\Delta\partial_t u_\eps$ has to be interpreted as an element in $L^2(0,T; V^*)$.
However, be aware that $\xi_\eps\in L^\infty(0,T; H)$, so that the estimate that we perform is formal.
To be rigorous, one should regularize $\beta$ with its Yosida approximation $\beta_\lambda$ and then 
carry out the computations: 
as a matter of fact, it is readily seen that the resulting estimate 
would be independent of $\lambda$, so that we avoid such technicalities here. We have
\[\begin{split}
  &\frac12\int_\Omega|-\delta\Delta u_\eps + \gamma(u_\eps)|^2(t)\\
  &\qquad+\eps\delta\int_{Q_t}|\nabla\partial_t u_\eps|^2
  +\delta\int_{Q_t}\nabla \xi_\eps\cdot\nabla \partial_t u_\eps +
  \eps\int_{Q_t}\gamma'(u_\eps)|\partial_t u_\eps|^2+
  \int_{Q_t}\gamma'(u_\eps)\xi_\eps\partial_t u_\eps\\
  &=\frac12\int_\Omega|-\delta\Delta u_0 + \gamma(u_0)|^2
  +\delta\int_{Q_t}\nabla\mu_\eps\cdot\nabla\partial_t u_\eps
  +\int_{Q_t}\mu_\eps\gamma'(u_\eps)\partial_t u_\eps\\
  &\qquad+\int_{Q_t}(\partial_tg_\eps-K\partial_t u_\eps)(-\delta\Delta u_\eps + \gamma(u_\eps))
  -\int_\Omega(g_\eps(t)-Ku_\eps(t))(-\delta\Delta u_\eps+\gamma(u_\eps))(t)\\
  &\qquad+\int_\Omega(g(0)-Ku_0)(-\delta\Delta u_0+\gamma(u_0))
  \qquad\forall\,t\in[0,T]\,.
\end{split}\]
Now, as we have anticipated, if we replace $\beta$ with its Yosida approximation $\beta_\lambda$,
the third term on the left-hand side would give the contribution
\[
  \int_{Q_t}\beta_\lambda'(\partial_t u_{\eps,\lambda})|\nabla \partial_t u_{\eps,\lambda}|^2\geq 0 
  \qquad\forall\,\lambda>0\,.
\]
Moreover, it is also clear by the properties of $\beta$ that the last term on the left-hand side is nonnegative.
On the right hand side, the first term is finite by assumption \eqref{ip_eps1} while the second term
is bounded uniformly in $\eps$ by \eqref{est3_eps}. 
Furthermore, by \eqref{est1_eps}, \eqref{est3_eps}--\eqref{est4_eps} and \eqref{ip_psi},
using the continuous embeddings $V\embed L^6(\Omega)$ and $\pier{H^3(\Omega)}\embed L^\infty(\Omega)$ we have
\[\begin{split}
  \int_Q\mu_\eps\gamma'(u_\eps)\partial_t u_\eps&\leq
  \int_0^T\norm{\mu_\eps(t)}_{L^\infty(\Omega)}\norm{\gamma'(u_\eps(t))}_{L^{6/5}(\Omega)}\norm{\partial_t u_\eps(t)}_{L^6(\Omega)}\,dt\\
  &\leq c\norm{\mu_\eps}_{L^2(0,T;\pier{H^3(\Omega)})}\norm{\partial_t u_\eps}_{L^2(0,T; V)}\norm{\gamma'(u_\eps)}_{L^\infty(0,T; L^{6/5}(\Omega))}\\
  &\leq c(1+\norm{u_\eps}^5_{L^\infty(0,T; L^6(\Omega))}) \leq c
\end{split}\]
for a certain constant $c>0$ that we have updated step by step.
Finally, we handle the last three terms
on the right-hand side using Young's inequality, the estimate \eqref{est1_eps}
and the assumptions \eqref{ip_eps1} and \eqref{conv_g_eps} by
\[
  c + \int_{Q_t}|-\delta\Delta u_\eps + \gamma(u_\eps)|^2 + \frac14\int_\Omega|-\delta\Delta u_\eps + \gamma(u_\eps)|^2(t)\,.
\]
Consequently, rearranging the terms and using the Gronwall inequality \pier{lead to}
\[
  \norm{-\delta\Delta u_\eps + \gamma(u_\eps)}_{L^\infty(0,T; H)}\leq c\,.
\]
\luca{Since $\gamma$ is monotone,
testing $-\delta\Delta u_\eps + \gamma(u_\eps)$ by $-\Delta u_\eps$,
integrating by parts and using the Young inequality \pier{yield}
(recall that $\delta >0$ is fixed here)
\begin{align*}
  \delta\int_\Omega|\Delta u_\eps|^2&\leq 
  \delta\int_{\Omega}|\Delta u_\eps|^2 + \int_\Omega\gamma'(u_\eps)|\nabla u_\eps|^2
  =\int_\Omega(-\Delta u_\eps)(-\delta\Delta u_\eps + \gamma(u_\eps))\\
  &\leq
  \frac\delta2\int_\Omega|\Delta u_\eps|^2 + \frac1{2\delta}
  \int_\Omega|-\delta\Delta u_\eps + \gamma(u_\eps)|^2
\end{align*}
\pier{almost everywhere in $(0,T)$}.
Rearranging the terms and \pier{invoking} elliptic regularity we deduce~then}
\beq
  \label{est5_eps}
  \norm{u_\eps}_{L^\infty(0,T; W_{\bf n})} + \norm{\gamma(u_\eps)}_{L^\infty(0,T; H)} \leq c\,,
\eeq
\pier{and consequently}, by comparison in \eqref{2},
\beq
  \label{est6_eps}
  \norm{\xi_\eps}_{L^\infty(0,T; H)}\leq c\,.
  \eeq

\subsection{Third estimate under assumption \eqref{ip_beta}}
By \eqref{ip_beta} and \eqref{est3_eps} we immediately have
\beq
  \label{est7_eps}
  \norm{\xi_\eps}_{L^2(0,T; H)}\leq c\,\pier{.}
\eeq
Then, \pier{with the help of a comparison in \eqref{2} we see that}
\[
  \norm{-\delta\Delta u_\eps + \gamma(u_\eps)}_{L^2(0,T; H)}\leq c\,\pier{.}
\]
\pier{Hence, by applying the same argument leading to \eqref{est5_eps} we arrive at} 
\beq
  \label{est8_eps}
  \norm{u_\eps}_{L^2(0,T; W_{\bf n})} + \norm{\gamma(u_\eps)}_{L^2(0,T; H)}\leq c\,.
\eeq

\subsection{Passage to the limit}
\label{ssec:limit_eps}
Let us assume first \eqref{ip_psi}. Then, by the estimates \eqref{est1_eps}--\eqref{est6_eps}
we deduce that there is a triplet $(u,\mu,\xi)$ such that
\begin{gather*}
  u \in W^{1,\infty}(0,T; V_0^*)\cap H^1(0,T; V)\cap L^\infty(0,T; W_{\bf n})\,,\\
  \mu \in L^\infty(0,T; V_0)\cap L^2(0,T; H^3(\Omega))\,, \qquad
  \xi \in L^\infty(0,T; H)\,, \qquad \eta \in L^\infty(0,T; H)
\end{gather*}
and, along a subsequence that we still denote by $\eps$ for simplicity,
\begin{gather*}
  u_\eps \wstarto u \quad\text{in } W^{1,\infty}(0,T; V_0^*)\cap L^\infty(0,T; W_{\bf n})\,,\qquad
  u_\eps\wto u \quad\text{in } H^1(0,T; V)\,,\\
  \mu_\eps \wstarto\mu \quad\text{in } L^\infty(0,T; V_0)\,,\qquad
  \mu_\eps \wto \mu \quad\text{in } L^2(0,T; H^3(\Omega))\,,\\
  \xi_\eps \wstarto \xi \quad\text{in } L^\infty(0,T; H)\,, \qquad 
  \gamma(u_\eps)\wstarto \eta \quad\text{in } L^\infty(0,T; H)\,,\\
  \eps \partial_t u_\eps \to 0 \quad\text{in } L^\infty(0,T; H)\,.
\end{gather*}
Now, from the first two convergences and a classical compactness result
(see e.g.~\cite[Cor.~4, p.~85]{simon}), we have that 
\[
  u_\eps \to u \quad\text{in } C^0([0,T];V)\,,
\]
which immediately implies that $\eta=\gamma(u)=\psi'(u)+Ku$ a.e.~in $Q$
by the strong-weak closure \pier{(see, e.g., \cite[Prop.~2.1, p.~29]{barbu-monot})}
of the maximal monotone operator $\gamma$.

Moreover, letting then $\eps\searrow0$
in \eqref{1}--\eqref{2}, we infer by the weak convergences that
\eqref{1lim}--\eqref{2lim} hold.
Now, proceeding as in the first estimate we have that
\begin{align*}
  \int_Q\xi_\eps\partial_t u_\eps &\leq \frac\delta2\int_\Omega|\nabla u_0|^2 + \int_\Omega\widehat\gamma(u_0)
  -\int_Q g_\eps\partial_t u_\eps + K\int_{Q}u_\eps\partial_t u_\eps\\
  &\quad- \int_Q|\nabla\mu_\eps|^2 - \frac\delta2\int_\Omega|\nabla u_\eps(T)|^2 - \int_\Omega\widehat\gamma(u_\eps(T))
\end{align*}
so that, using the convergences already proved and the (weak) lower
semicontinuity of the convex integrands, we infer that
\[\begin{split}
  \limsup_{\eps\searrow0}\int_Q\xi_\eps\partial_t u_\eps&\leq
  \frac\delta2\int_\Omega|\nabla u_0|^2 + \int_\Omega\widehat\gamma(u_0)
  -\int_Q g\partial_tu + K\int_Qu\partial_t u\\
  &\quad- \int_Q|\nabla\mu|^2 - \frac\delta2\int_\Omega|\nabla u(T)|^2 - \int_\Omega\widehat\gamma(u(T))\,.
\end{split}\]
Since we have already proved \eqref{1lim}--\eqref{2lim}, performing the 
same computation on the limit problem yields that the right-hand side
is exactly $\int_Q\xi\partial_t u$. Hence,
\[
  \limsup_{\eps\searrow0}\int_Q\xi_\eps\partial_t u_\eps\leq\int_Q\xi\partial_t u\,,
\]
and this is enough to conclude that $\xi\in\beta(\partial_t u)$ a.e.~in $Q$.

If \eqref{ip_beta} is in order, we can proceed in exactly the same way using the estimates
\eqref{est7_eps}--\eqref{est8_eps} instead of \eqref{est5_eps}--\eqref{est6_eps}:
note that in this case, by \cite[Cor.~4, p.~85]{simon} 
we can only infer the strong convergence
\[
  u_\eps \to u \quad\text{in } C^0([0,T]; H)\pier{{}\cap L^2(0,T;V)}\,.
\]
Since also $u_\eps(T)\wto u(T)$ in $V$, the argument performed above still works
by weak lower semicontinuity.


\section{Proof of Theorem~\ref{thm2}}\label{sec:delta-to-0}
This section is devoted to the proof of Theorem~\ref{thm2}. We shall consider $\eps>0$ fixed and we will not write explicitly the dependence on $\eps$ for the quantities in play. Thus, in what follows we shall let  $(u_\delta,\mu_\delta,\xi_\delta)$ be the solution to \eqref{u}--\eqref{3} with respect to the data $(u_{0\delta}, g_\delta)$ for every $\delta >0$.

\subsection{First estimate}
To obtain the first estimate, proceed as in Section~\ref{sec:limit-as-epssearrow0}: 
we test \eqref{1} by $\mud$ and we subtract \eqref{2} tested by $\partial_t\ud$. 
By integration over $(0,t)$ for $t\in[0,T]$, we obtain
\[\begin{split}
  \int_{Q_t}|\nabla\mud|^2 &+ \eps\int_{Q_t}|\partial_t \ud|^2 + \int_{Q_t}\xid\partial_t \ud
  +\frac\delta2\int_\Omega|\nabla \ud(t)|^2 + \int_\Omega\psi(u_\delta(t))\\
&=\frac\delta2\int_\Omega|\nabla u_{0\delta}|^2 
+ \int_\Omega\psi(u_{0\delta})-\int_{Q_t}g_\delta\partial_t \ud\,.
\end{split}
\]
Since $\psi'(u_{0\delta})$ is bounded in $H$ by \eqref{ip_delta4}, hence also
$\psi(u_{0\delta})$ is bounded in $L^1(\Omega)$ as already 
pointed out at the beginning of Section~\ref{ssec:est1_ep}\pier{. Then,}
the first two terms on the right-hand side are bounded uniformly in $\delta$.
Moreover, one has
\[
\int_{Q_t}g_\delta\partial_t\ud\le \frac1{2\eps}\norm{g_\delta}^2_{L^2(0,T; H)}+\frac\eps2\int_{Q_t}|\partial_t\ud|^2
\]
Consequently, recalling also that 
\[
  \widehat\beta(\partial_t\ud)\leq \widehat\beta(\partial_t\ud) 
  + \widehat{\beta^{-1}}(\xi_\delta)=\xi_\delta\partial_t\ud\,,
\]
by the assumption \eqref{ip_delta5} on $(g_\delta)_\delta$ and the Gronwall lemma we deduce that
\begin{equation}\label{est1_delta}
  \norm{\mud}_{L^2(0,T; V_0)} +\norm{\partial_t\ud}_{L^2(0,T; H)}+\delta^{1/2}\norm{\ud}_{L^\infty(0,T;V)}+
  \norm{\widehat\beta(\partial_t\ud)}_{L^1(Q)}\leq c,
\end{equation}
where $c$ is a positive constant independent of $\delta$.

\subsection{Second estimate}
We repeat the same estimate as in Section~\ref{sec:second-estimate}.
First of all, we need to identify and estimate the initial values of the solutions
$\mu_{0\delta}:=\mu_\delta(0)$ and $u_{0\delta}':=\partial_t u_\delta(0)$.

\begin{lem}
  For every $\delta>0$, there exists a unique triplet 
  $(\mu_{0\delta}, u_{0\delta}', \xi_{0\delta})\in W_0\times H\times H$ such that 
  \[
    u_{0\delta}' - \Delta\mu_{0\delta}=0\,, \quad
    \mu_{0\delta} = \eps u_{0\delta}' + \xi_{0\delta}
    -\delta\Delta u_{0\delta} + \psi'(u_{0\delta}) + g_\delta(0)\,, \quad
    \xi_{0\delta} \in \beta(u_{0\delta}')
  \]
  \pier{almost everywhere in $\Omega.$}
  Moreover, there exists a positive constant $c$, independent of $\delta$, such that 
  \[
  \int_\Omega|\nabla \mu_{0\delta}|^2 + \eps\int_\Omega|u_{0\delta}'|^2 + 
  \int_\Omega\widehat{\beta^{-1}}(\xi_{0\delta}) \leq c \qquad\forall\,\delta>0\,.
  \]
\end{lem}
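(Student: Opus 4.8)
The plan is to reproduce the argument used for the analogous lemma in Section~\ref{sec:second-estimate}, the only point requiring care being that every constant must now be tracked to be independent of $\delta$ (here $\eps>0$ is fixed once and for all). First I would introduce $z_{0\delta}:=-\delta\Delta u_{0\delta}+\psi'(u_{0\delta})+g_\delta(0)$ and note that $z_{0\delta}\in H$: indeed $u_{0\delta}\in W_{\bf n}$ gives $\Delta u_{0\delta}\in H$, assumption \eqref{ip_delta4} gives $\psi'(u_{0\delta})\in H$, and $g_\delta(0)\in H$ since $g_\delta\in H^1(0,T;H)\embed C^0([0,T];H)$. Existence and uniqueness of the triplet then follow from the maximal monotonicity of $\beta$ exactly as in \cite[p.~1006]{bcst1}: eliminating $u_{0\delta}'=\Delta\mu_{0\delta}$ and $\xi_{0\delta}=\mu_{0\delta}-\eps u_{0\delta}'-z_{0\delta}$, the problem reduces to finding $\mu_{0\delta}\in V_0$ with $-\Delta\mu_{0\delta}+(\eps I+\beta)^{-1}(\mu_{0\delta}-z_{0\delta})=0$, which is uniquely solvable by standard monotone-operator theory because $(\eps I+\beta)^{-1}$ is single-valued, monotone and $(1/\eps)$-Lipschitz with $(\eps I+\beta)^{-1}(0)=0$; elliptic regularity then upgrades $\mu_{0\delta}$ to $W_0$, after which $u_{0\delta}':=\Delta\mu_{0\delta}\in H$ and $\xi_{0\delta}:=\mu_{0\delta}-\eps u_{0\delta}'-z_{0\delta}\in H$ satisfy the inclusion.

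For the uniform estimate I would test the first equation by $\mu_{0\delta}$, the second by $u_{0\delta}'$, and subtract; since $\mu_{0\delta}\in V_0$, an integration by parts produces
\[
  \int_\Omega|\nabla\mu_{0\delta}|^2+\eps\int_\Omega|u_{0\delta}'|^2+\int_\Omega\xi_{0\delta}u_{0\delta}'+\int_\Omega z_{0\delta}u_{0\delta}'=0\,.
\]
From $\xi_{0\delta}\in\beta(u_{0\delta}')$ the Young (Fenchel) identity gives $\xi_{0\delta}u_{0\delta}'=\widehat\beta(u_{0\delta}')+\widehat{\beta^{-1}}(\xi_{0\delta})$, and --- crucially, since $\eps$ is fixed --- I would bound the last term simply by $-z_{0\delta}u_{0\delta}'\le\frac1{2\eps}|z_{0\delta}|^2+\frac\eps2|u_{0\delta}'|^2$. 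Dropping the nonnegative term $\int_\Omega\widehat\beta(u_{0\delta}')$ and rearranging yields
\[
  \int_\Omega|\nabla\mu_{0\delta}|^2+\frac\eps2\int_\Omega|u_{0\delta}'|^2+\int_\Omega\widehat{\beta^{-1}}(\xi_{0\delta})\le\frac1{2\eps}\norm{z_{0\delta}}_H^2\,.
\]
I stress that, in contrast with the $\eps\searrow0$ lemma of Section~\ref{sec:second-estimate}, one cannot here cancel $\widehat\beta(u_{0\delta}')$ against a term $\int_\Omega\widehat{\beta^{-1}}(-z_{0\delta})$, because no analogue of \eqref{ip_eps3} is assumed in Theorem~\ref{thm2} and $\widehat{\beta^{-1}}(-z_{0\delta})$ need not be integrable even when $\norm{z_{0\delta}}_H$ is bounded; the fixed positive $\eps$ is exactly what makes the cruder bound sufficient.

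The remaining --- and only genuinely $\delta$-sensitive --- step is to bound $\norm{z_{0\delta}}_H$ uniformly in $\delta$, and this is where the precise powers of $\delta$ in \eqref{ip_delta4}--\eqref{ip_delta5} enter: $\delta\norm{\Delta u_{0\delta}}_H=\delta^{1/4}\bigl(\delta^{3/2}\norm{\Delta u_{0\delta}}_H^2\bigr)^{1/2}\le\delta^{1/4}C^{1/2}$, $\norm{\psi'(u_{0\delta})}_H\le C^{1/2}$, and $\norm{g_\delta(0)}_H\le c\,\norm{g_\delta}_{H^1(0,T;H)}\le c$ via the embedding $H^1(0,T;H)\embed C^0([0,T];H)$; hence $\norm{z_{0\delta}}_H\le c$ for all $\delta\in(0,1)$, with $c$ depending on $\eps$ but not on $\delta$, and the claimed bound follows. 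I expect the main obstacle to be purely a matter of bookkeeping --- checking that the $\delta\Delta u_{0\delta}$ contribution to $z_{0\delta}$ is absorbed by the half-power of $\delta$ that \eqref{ip_delta4} makes available --- rather than any conceptual difficulty; uniqueness, if one prefers an independent argument, is immediate upon taking differences of two solutions and testing as above, all three resulting terms being nonnegative.
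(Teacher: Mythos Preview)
Your proposal is correct and follows essentially the same approach as the paper: introduce $z_{0\delta}$, invoke maximal monotonicity for existence and uniqueness, test the two equations by $\mu_{0\delta}$ and $u_{0\delta}'$, use the Fenchel identity for $\xi_{0\delta}u_{0\delta}'$, and absorb $-z_{0\delta}u_{0\delta}'$ via the Young inequality $\tfrac1{2\eps}\norm{z_{0\delta}}_H^2+\tfrac\eps2\norm{u_{0\delta}'}_H^2$, with $\norm{z_{0\delta}}_H$ bounded by \eqref{ip_delta4}--\eqref{ip_delta5}. Your added remarks on why the $\widehat{\beta^{-1}}(-z_{0\delta})$ trick from Section~\ref{sec:second-estimate} is unavailable here, and the explicit tracking of the power of $\delta$ in $\delta\norm{\Delta u_{0\delta}}_H$, go slightly beyond the paper but are entirely in line with it.
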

\begin{proof}
  Since $z_{0\delta}:=-\delta \Delta u_{0\delta} + \psi'(u_{0\delta}) + g_\delta(0)\in H$, 
  the existence and uniqueness of
  $(\mu_{0\delta}, u_{0\delta}', \xi_{0\delta})$ follows from the maximal monotonicity of $\beta$,
  arguing as in Section~\ref{sec:second-estimate}. 
  Moreover, testing the first equation by $\mu_{0\delta}$, the second by $u_{0\delta}'$
  and taking the difference we have
  \[
  \int_\Omega|\nabla\mu_{0\delta}|^2 + \eps\int_\Omega|u_{0\delta}'|^2
  +\int_\Omega\xi_{0\delta} u_{0\delta}' + \int_\Omega z_{0\delta} u_{0\delta}'=0\,.
  \]
  By monotonicity of $\beta$, the fact that $z_{0\delta}$ is bounded in $H$ 
  thanks to the assumptions \eqref{ip_delta4}--\eqref{ip_delta5},
  and the Young inequality we have
  \[
  \int_\Omega|\nabla\mu_{0\delta}|^2 + \frac\eps2\int_\Omega|u_{0\delta}'|^2
  +\int_\Omega\widehat\beta(u_{0\delta}') + \int_\Omega\widehat{\beta^{-1}}(\xi_{0\delta})=
  -\int_\Omega z_0 u_{0\delta}'\leq \pier{\frac1{2\eps}}\norm{z_{0\delta}}_H^2\leq c\,,
  \]
  from which the estimate follows.
\end{proof}

Performing then the same \pier{computations} as in Section~\ref{sec:second-estimate} we \pier{deduce that}
\[\begin{split}
  &\frac12\int_\Omega|\nabla\mud(t)|^2 + \frac\eps2\int_\Omega|\partial_t \ud(t)|^2
  +\int_\Omega\widehat{\beta^{-1}}(\xid (t)) 
+ \delta\int_{Q_t}|\nabla\partial_t \ud|^2
+\int_{Q_t}\gamma'(\ud)|\partial_t\ud|^2\\
  &=\frac12\int_\Omega|\nabla\mu_{0\delta}|^2 + \frac\eps2\int_\Omega|u_{0\delta}'|^2 + \int_\Omega\widehat{\beta^{-1}}(\xi_{0\delta})
  -\int_{Q_t}\left(\partial_t g_\delta - K\partial_t \ud\right)\partial_t \ud\\
  &\leq c + \frac1{2}\norm{\partial_t g_\delta}^2_{L^2(0,T; H)} + \left(\frac 1 2+K\right)\int_0^t\int_{\Omega}|\partial_t \ud|^2\,.
\end{split}\]
As a result, we obtain the following estimate
\begin{multline}\label{est2_delta}
    \norm{\mud}_{L^\infty(0,T;V_0)} +  \norm{\ud}_{W^{1,\infty}(0,T; H)}+\|\widehat{\beta^{-1}}(\xid)\|_{L^\infty(0,T;L^1(\Omega))}+\sqrt\delta\norm{\ud}_{H^1(0,T;V)}\leq c,
\end{multline}
whence, by comparison in \eqref{1}, the inequality
\begin{equation}\label{est3_delta}
\|\mud\|_{L^\infty(0,T; W_0)}\le c.
\end{equation}

\subsection{Third estimate}
The purpose of this subsection is to show that if the initial data satisfies
the boundedness assumption \eqref{ip_delta3}
then $u_\delta$ stays bounded in an interval $[a_0',b_0']\subset(a,b)$
uniformly in $\delta$, namely
\[
  a_0'\leq u_\delta\le b_0',\quad\text{a.e.~in } Q,
\]
with $a_0', b_0'\in\erre$ independent of $\delta$ and $[a_0,b_0]\subseteq[a_0', b_0']\subset(a,b)$. 
The idea here is to apply the maximum principle to a 
nonlinear elliptic system that arises from a time-discretization of \eqref{eq2:intro},
as in Section~\ref{sec:prelim}.

To this end, let us note that, thanks to the estimate \eqref{est3_delta},
the continuous embedding $\pier{W_0}\embed L^\infty(\Omega)$ and 
assumption \eqref{ip_delta5}, there exists a positive constant $M$,
independent of $\delta$ such that
\begin{equation}\label{est_inf_delta}
  \|\mud-g_\delta\|_{L^\infty(Q)}\leq M.
\end{equation}
Now, in principle the constants $a_0'$ and $b_0'$ given by Theorem~\ref{thm-prelim}
may depend on $\delta$.
However, 
going back to Section~\ref{sec:prelim}, we note that 
the choice of the constants $\bar a, \bar b, a_0', b_0'$ only 
depends on $\norm{\mu_{\delta\lambda}-g_{\delta}}_{L^\infty(Q)}$
and the behaviour of $\psi$.
Hence, the uniform estimate \eqref{est_inf_delta}
implies that $a_0'$ and $b_0'$ can be chosen independently of 
the parameter $\delta$.

As a consequence, we deduce that there \pier{exist} $a_0', b_0' \in\erre$,
independent of $\delta$, with $[a_0,b_0]\subseteq[a_0', b_0']\subset(a,b)$, such that  
\beq\label{est4_delta}
  a_0'\leq u_\delta\leq b_0'\quad\text{a.e.~in } Q\,, \quad\forall\,\delta>0\,\pier{.}
\eeq
\pier{Hence, since $\psi'\in C^1([a_0', b_0'])$, we also have}
\beq\label{est5_delta}
  \norm{\psi'(u_\delta)}_{L^\infty(Q)}+ \norm{\psi''(u_\delta)}_{L^\infty(Q)}\leq c\,.
\eeq

Arguing now as in Section~\ref{sec:prelim} in order to prove \eqref{est-lam3}--\eqref{est-lam4}, 
i.e.~formally testing \eqref{2} by $-\delta^{1/2}\Delta\partial_t u_\delta$, we have by the Young inequality 
and estimate \eqref{est5_delta} that 
\begin{align*}
  \eps\delta^{1/2}&\int_{Q_t}|\nabla\partial_t u_\delta|^2 
  +\delta^{1/2}\int_{Q_t}\nabla\xi_\delta\cdot\nabla\partial_t u_\delta
  + \frac{\delta^{3/2}}2\int_{\Omega}|\Delta u_\delta(t)|^2\\
  &\leq
  \frac{\delta^{3/2}}2\int_{\Omega}|\Delta u_{0\delta}|^2 
  -\delta^{1/2} \int_0^t{}_{V^*}\ip{\Delta\partial_t\ud}{\mud-\psi'(\ud)-g_\delta}_V\\
  &=\frac{\delta^{3/2}}2\int_{\Omega}|\Delta u_{0\delta}|^2 
  + \int_{Q_t}(\nabla\mud-\psi''(\ud)\nabla\ud-\nabla g_\delta)\cdot(\delta^{1/2}\nabla\partial_t \ud) \\
  &\leq c\left(\delta^{3/2}\int_{\Omega}|\Delta u_{0\delta}|^2
  +\norm{\mu_\delta}_{L^2(0,T; V_0)}^2
  +\delta^{1/2}\norm{g_\delta}_{L^2(0,T;V)}^2
  \right)\\
  &\quad+ \frac{\eps\delta^{1/2}}2\int_{Q_t}|\nabla\partial_t\ud|^2
  +c\delta^{1/2}\int_{Q_t}|\nabla \ud|^2\,.
\end{align*}
Hence, rearranging the terms, using the assumptions \eqref{ip_delta1}--\eqref{ip_delta5},
together with the monotonicity of $\beta$,
we infer that 
\[
  \frac{\eps}2\delta^{1/2}\int_{Q_t}|\nabla\partial_t u_\delta|^2 
  + \frac{\delta^{3/2}}2\int_{\Omega}|\Delta u_\delta(t)|^2
  \leq c\left(1 + \delta^{1/2}\int_{Q_t}|\nabla \ud|^2\right)\,.
\]
Writing $\ud=u_{0\delta}+\int_0^\cdot\partial_t\ud(s)\,ds$ and recalling the assumption
\eqref{ip_delta4}, we deduce that (updating the constant $c$ at each step)
\begin{align*}
  &\frac{\eps\delta^{1/2}}2\int_{Q_t}|\nabla\partial_t u_\delta|^2 
  + \frac{\delta^{3/2}}2\int_{\Omega}|\Delta u_\delta(t)|^2\\
  &\leq c\left(1 + 2\delta^{1/2}\norm{\nabla u_{0\delta}}_H^2+
  2T\delta^{1/2}\int_0^t\int_{Q_s}|\nabla\partial_t \ud|^2\,ds\right)\\
  &\leq c\left(1+ \delta^{1/2}\int_0^t\int_{Q_s}|\nabla\partial_t \ud|^2\,ds\right)\,.
\end{align*}
Hence, by the Gronwall lemma we deduce also the estimate
\beq
  \label{est6_delta}
  \delta^{1/4}\norm{\nabla\partial_t\ud}_{L^2(0,T; H)}+\delta^{3/4}\norm{\Delta\ud}_{L^\infty(0,T; H)}\leq c\,,
\eeq
and, by comparison in \eqref{2},
\beq
  \label{est7_delta}
  \norm{\xi_\delta}_{L^\infty(0,T; H)}\leq c\,.
\eeq

\subsection{Passage to the limit}
From the a priori estimates \eqref{est1_delta}--\eqref{est7_delta}, 
using standard compactness results we have the following convergences, up to a subsequence,
\[
  \begin{aligned}
u_\delta\stackrel{*}\rightharpoonup u \quad&\text{in }W^{1,\infty}(0,T;H)\cap L^\infty(Q)\,,\\
\mu_\delta\stackrel{*}\rightharpoonup \mu \quad&\text{in }L^\infty(0,T;W_0)\,,\\    
\delta^{1/2}u_\delta\to 0 \quad&\text{in }H^1(0,T; V)\,,\\
\delta\ud \to 0 \quad&\text{in }L^\infty(0,T; W_{\bf n})\,,\\
\xi_\delta\stackrel{*}\rightharpoonup \xi \quad&\text{in }L^\infty(0,T; H).\\
  \end{aligned}
\]

In order to pass to the limit 
in the equation \eqref{2}, we need to prove now 
a strong convergence for the sequence $(u_\delta)_\delta$.
We take the difference of \eqref{1}--\eqref{2} for 
two different indexes $\delta$ and $\delta'$: then,
we test the first equation by $\mu_\delta-\mu_{\delta'}$
and the second by $\partial_t(u_\delta-u_{\delta'})$, obtaining 
\begin{align*}
  &\int_{Q_t}|\nabla(\mu_\delta-\mu_{\delta'})|^2
  +\eps\int_{Q_t}|\partial_t(u_\delta-u_{\delta'})|^2
  +\int_{Q_t}(\xi_\delta-\xi_{\delta'})(\partial_t u_\delta - \partial_t u_{\delta'})\\
  &\leq S_{\delta,\delta'}(t)
  -\int_{Q_t}(\psi'(u_\delta) - \psi'(u_{\delta'}))\partial_t(u_\delta - u_{\delta'})
\end{align*}
where
\[
 S_{\delta,\delta'}(t):=\int_{Q_t}(\delta\Delta u_\delta - \delta'\Delta u_{\delta'})\partial_t(u_\delta - u_{\delta'})
 -\int_{Q_t}(g_\delta - g_{\delta'})\partial_t(u_\delta - u_{\delta'})\,.
\]
Note that by \eqref{est6_delta} and \eqref{ip_delta6} we have that 
$(S_{\delta,\delta'})_{\delta,\delta'}$ is uniformly bounded in $W^{1,\infty}(0,T)$
and converges pointwise to $0$ due to the convergences above.
Hence, we deduce that 
\[
  S_{\delta,\delta'} \to 0 \quad\text{in } C^0([0,T])\,.
\]
Moreover, since $\psi'\in C^1([a_0',b_0'])$, we have
\begin{align*}
   &\int_{Q_t}|\psi'(u_\delta)-\psi'(u)||\partial_t(u_\delta-u_{\delta'})|\\
   &\leq
   \frac \eps2 \int_Q |\partial_t(u_\delta-u_{\delta'})|^2 + 
   \frac1{2\eps}\norm{\psi''}^2_{L^\infty(a_0',b_0')}\int_{Q_t}|u_\delta-u_{\delta'}|^2\,,
\end{align*}
where 
\[
  \int_{Q_t}|u_\delta-u_{\delta'}|^2
  \leq 2T\norm{u_{0\delta}-u_{0\delta'}}_H^2 +2T \int_0^t\int_{Q_s}|\partial_t(u_\delta-u_{\delta'})|^2\,ds\,.
\]
Hence, rearranging the terms and using the monotonicity of $\beta$ and \eqref{incl}, we have \pier{that}
\begin{align*}
  &\int_{Q_t}|\nabla(\mu_\delta-\mu_{\delta'})|^2
  +\frac{\eps}2\int_{Q_t}|\partial_t(u_\delta-u_{\delta'})|^2\\
  &\leq \norm{S_{\delta,\delta'}}_{L^\infty(0,T)} + 
  c\left(\norm{u_{0\delta}-u_0}_H^2 + \int_0^t\int_{Q_s}|\partial_t(u_\delta-u)|^2\,ds\right)
\end{align*}
for a positive constant $c$, independent of $\delta$ and $\delta'$.
The Gronwall lemma yields then
\beq\label{error}
  \norm{\mu_\delta-\mu_{\delta'}}_{L^2(0,T; V_0)}^2 + 
  \norm{\partial_t(u_\delta-u_{\delta'})}^2_{L^2(0,T; H)}\leq
  c\left(\norm{S_{\delta,\delta'}}_{L^\infty(0,T)} + \norm{u_{0\delta}-u_{0\delta'}}_H^2\right)
\eeq
possibly updating the value of $c$. Recalling again \eqref{ip_delta6}, we deduce that 
\[
  u_\delta\to u\quad \text{in }H^1(0,T;H)\,,\qquad
  \mu_\delta\to \mu\quad \text{in }L^2(0,T;V_0)\,.
\]
In particular, we have the convergence
\[
  \int_Q |\psi'(u_\delta)-\psi'(u)|^2\leq \norm{\psi''}_{L^\infty(a_0',b_0')}^2\int_Q |u_\delta-u|^2\to 0.
\]
Therefore, the strong convergence of $u_\delta$ and 
the weak convergence of $\xi_\delta$ to $\xi$ 
allow us to prove \pier{\eqref{pier6}, i.e. the inclusion $\xi\in\beta(\partial_t u)$}, by maximal monotonicity. \pier{Then, passing to the limit 
in \eqref{1}--\eqref{3} as $\delta\searrow0$,} we can conclude. 

Finally, note that letting $\delta'\searrow0$ in \eqref{error} 
\pier{and taking \eqref{est6_delta} into account, by the Young inequality
we obtain}
\[
  \norm{\mu_\delta-\mu}_{L^2(0,T; V_0)}^2 + 
  \norm{\partial_t(u_\delta-u)}^2_{L^2(0,T; H)}\leq
  c\left(\norm{u_{0\delta}-u_0}_H^2
  +\norm{g_\delta-g}^2_{L^2(0,T; H)}
  +\delta^{1/2}\right)\,,
\]
\pier{that is nothing but \eqref{errorest}. Thus, we conclude the proof of Theorem~\ref{thm2}.}

\section*{Acknowledgments}
\pier{EB and PC gratefully acknowledge some 
financial support from the GNAMPA (Gruppo Nazionale per l'Analisi Matematica, 
la Probabilit\`a e le loro Applicazioni) of INdAM (Isti\-tuto 
Nazionale di Alta Matematica) and the IMATI -- C.N.R. Pavia.
Moreover, PC recognizes the contribution by the Italian Ministry of Education, 
University and Research~(MIUR): Dipartimenti di Eccellenza Program (2018--2022) 
-- Dept.~of Mathematics ``F.~Casorati'', University of Pavia.} 
LS has been funded by the Vienna Science and 
Technology Fund (WWTF) through Project MA14-009. 
GT acknowledges the support of INdAM's GNFM (Gruppo Nazionale per la Fisica Matematica) and the Grant of Excellence Departments, MIUR-Italy (Art.$1$, commi $314$-$337$, Legge $232$/$2016$).


\end{document}